\newtheorem{definition}{Definition}[section]
\newtheorem{theorem}{Theorem}[section]  
\newtheorem{lemma}[theorem]{Lemma}     
\newtheorem{proposition}[theorem]{Proposition}  
\newcommand{\CA}{C_0(\mathbb{A})}
\newtheorem{example}[theorem]{Example}
\newcommand{\sectwo}{Section 2}
\newcommand{\Z}{\mathbb{Z}}
\newcommand{\Q}{\mathbb{Q}}
\newcommand{\R}{\mathbb{R}}
\newcommand{\T}{\mathbb{T}}
\newcommand{\A}{\mathbb{A}}
\newcommand{\cstar}{C^{*}}
\newcommand{\what}[1]{\widehat{#1}}
\newcommand{\id}{\mathrm{id}}
\newcommand{\Hom}{\mathrm{Hom}}
\newtheoremstyle{remark}
    {3pt}            
    {3pt}            
    {\normalfont}    
    {}               
    {\bfseries\color{blue}} 
    {.}              
    { }              
    {}               
\theoremstyle{remark}
\newtheorem{remark}{Remark}[section] 
\title{$K$-Theory of Adelic and Rational Group $C^*$-algebras via Generalized Winding Numbers}
\author{Hang Wang\footnote{Research Center of Operator Algebras, East China Normal University, Shanghai, China. \newline Email: wanghang@math.ecnu.edu.cn} ~~and~ Wenqing Wu\footnote{Research Center of Operator Algebras, East China Normal University, Shanghai, China. \newline Email: wuwenqingsz@qq.com} }
\begin{document}
\maketitle
\begin{abstract}
    We establish a two-part framework for analyzing homotopy equivalence in periodic adelic functions. First, we introduce the concept of pre-periodic functions and define their homotopy invariant through the construction of a generalized winding number. Subsequently, we establish a fundamental correspondence between periodic adelic functions and pre-periodic functions. By extending the generalized winding number to periodic adelic functions, we demonstrate that this invariant completely characterizes homotopy equivalence classes within the space of periodic adelic functions. Building on this classification, we further obtain an explicit description of the $K_{1}$-group of the rational group $C^\ast$-algebra, $K_{1}(C^{*}(\mathbb{Q}))$. Finally, we further employed a similar strategy to derive the K-theory of \(K_1(C^{\ast}(\A))\).
\end{abstract}
\section{Introduction}
The study of $C^\ast$-algebras related to number theory has attracted significant interest in the intersection of operator algebras and arithmetic. Some works consider $C^*$-algebras associated with integral domains or rings of integers in number fields, involving their crossed product structures over finite and infinite adele spaces\cite{ref1,ref2}; in addition, other studies investigate crossed product $C^\ast$-algebras associated with adele rings \cite{ref3,ref4}. The computation of $K$-theory for group $C^*$-algebras associated with adelic structures is a fundamental problem at the intersection of functional analysis and number theory. Investigations into $p$-adic and adelic $C^*$-algebras, exemplified by works such as \cite{MR3300223} and \cite{MR883498}, have predominantly focused on the structure of $C^*$-algebras attached to adelic general linear groups.
While these papers employ advanced techniques from noncommutative geometry and higher algebraic $K$-theory, we instead focus directly on the $C^*$-algebra of the adele ring $\mathbb{A}$ itself and develop tools that bridge these abstract methods with more concrete harmonic-analytic techniques, providing a complementary perspective.
In particular, this paper develops elementary methods to compute the $K$-theory of $C^*(\mathbb{A})$ and $C^*(\mathbb{Q})$ while establishing new connections between adelic analysis and topological invariants.

The continuous functions on $\mathbb{A}$, although fundamental to its analytical structure, have received limited direct attention due to the prevailing technical approach of approximating them by adelic smooth functions. However, if we aim to compute the K-theory via the isomorphism between \( C^*(\mathbb{Q}) \) and \( C_0(\A/\mathbb{Q}) \), a direct analysis of adelic continuous functions becomes necessary. We describe certain topological properties of adelic $\mathbb{Q}$-periodic continuous functions by introducing \textit{pre-periodic functions} – a class of real-valued functions capturing the essential approximate periodicity of their adelic counterparts. Formally, for $f \in C(\mathbb{R}, S^1)$, we define pre-periodicity through the condition:
$$
\forall \varepsilon > 0,\ \exists N\in \mathbb{Z}^+\ \text{such that}\ \forall k \in \mathbb{Z},\ x \in \mathbb{R}:\ |f(x + kN) - f(x)| < \varepsilon.
$$
This approximate periodicity  enables the construction of a $\mathbb{Q}$-valued homotopy invariant, the \textit{generalized winding number}, defined for pre-periodic functions as:
$$
\lim_{x\to+\infty}\frac{\tilde{f}(x)-\tilde{f}(-x)}{2x},
$$
where $\tilde{f}$ denotes a lift of $f$ to $\mathbb{R}$. For adelic functions, we fix the finite adele component and compute this invariant for the resulting real-valued function, thereby linking local analytic behavior to global topological properties.

Our methodology distinguishes itself from previous work through its simplicity. We utilize elementary tools: Gelfand transforms for commutative $C^*$-algebras, function lifting techniques in covering spaces, and the Chinese Remainder Theorem for adelic approximations. This approach not only makes the $K$-theoretic computations more transparent but also reveals hidden geometric structures in the adelic function spaces. Specifically, the pre-periodicity condition allows decomposition of adelic functions into local components while preserving global homotopical information through the winding number invariant.

The main result of this paper can be summarized in the following theorem:
\begin{theorem}\label{thm:main}
The $K$-theory of the group $C^*$-algebras associated with the adele ring $\A$ 
and its lattice $\Q\subset \A$ can be described as follows:
\begin{enumerate}[(a)]

\item For the rational lattice $\Q\subset \A$,
\[
K_0\bigl(C^*(\Q)\bigr) \;\cong\; \Z, 
\qquad
K_1\bigl(C^*(\Q)\bigr) \;\cong\; \Q,
\]
as stated in Remark \ref{K0Q} and Theorem \ref{K1Q}.
\item For the adele ring $\A$,
\[
K_0\bigl(C^*(\A)\bigr) \;\cong\; 0, 
\qquad
K_1\bigl(C^*(\A)\bigr) \;\cong\; C_0(\A_f,\Z),
\]
see Remark \ref{K0A} and Theorem \ref{K1A}.
\end{enumerate}
\end{theorem}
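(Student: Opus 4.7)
The plan is to prove the four assertions separately, combining Pontryagin duality with the pre-periodic function and generalized winding number framework developed earlier in the paper.

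For part (a), I first apply Pontryagin duality to the discrete group $\Q$ to obtain $C^*(\Q) \cong C_0(\what{\Q}) \cong C(\A/\Q)$, where the compact connected adelic solenoid $\A/\Q$ is the Pontryagin dual of $\Q$. Connectedness and compactness immediately yield $K_0(C(\A/\Q)) \cong \Z$, generated by the class of the unit; this is the content of Remark \ref{K0Q}. For $K_1(C^*(\Q)) \cong \Q$, I identify homotopy classes of unitaries in $C(\A/\Q)$ with homotopy classes of $\Q$-periodic, $S^1$-valued continuous functions on $\A$. The pre-periodic machinery now enters: after fixing the finite-adele coordinate using the Chinese Remainder Theorem, each such function reduces to a pre-periodic function on $\R$, and the generalized winding number assigns it a well-defined rational invariant. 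The classification theorem for pre-periodic functions established earlier in the paper guarantees that this assignment descends to an isomorphism $K_1(C^*(\Q)) \to \Q$, which is Theorem \ref{K1Q}.

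For part (b), I exploit the self-duality $\what{\A} \cong \A$ to obtain $C^*(\A) \cong C_0(\A)$, and then use the topological decomposition $\A = \R \times \A_f$ to write $C_0(\A) \cong C_0(\R) \otimes C_0(\A_f)$. Bott periodicity applied to the $\R$-factor yields
\[
K_0\bigl(C_0(\A)\bigr) \cong K_1\bigl(C_0(\A_f)\bigr), \qquad K_1\bigl(C_0(\A)\bigr) \cong K_0\bigl(C_0(\A_f)\bigr).
\]
Since $\A_f$ is locally compact Hausdorff and totally disconnected, $K_1(C_0(\A_f)) = 0$ while $K_0(C_0(\A_f)) \cong C_0(\A_f, \Z)$, the group of compactly supported integer-valued continuous functions, generated by indicator functions of compact open subsets. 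This produces Remark \ref{K0A} and Theorem \ref{K1A}.

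The main obstacle is the isomorphism $K_1(C^*(\Q)) \cong \Q$ in part (a), which depends on the full classification by the generalized winding number. Surjectivity requires producing, for each $q \in \Q$, an explicit $\Q$-periodic adelic unitary whose winding number equals $q$. Injectivity — showing that every unitary of vanishing winding number is null-homotopic — is the deepest ingredient, since the homotopy must be constructed adelically and must synchronize contributions at every prime, which is precisely where approximate periodicity and the Chinese Remainder Theorem do their work. By contrast, the $K$-theory of $C^*(\A)$ in part (b) is reduced, after the Bott decomposition, to the standard fact that totally disconnected locally compact spaces have vanishing $K^1$.
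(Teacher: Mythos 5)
Your treatment of part (a) matches the paper's route: Pontryagin duality gives $C^*(\Q)\cong C(\A/\Q)$, and the generalized winding number on $\Q$-periodic adelic unitaries, built by fixing the finite-adele coordinate via the Chinese Remainder Theorem and appealing to the lifting machinery of Section~4, produces the isomorphism $K_1(C^*(\Q))\cong\Q$ exactly as in Theorem~\ref{K1Q} and Lemma~\ref{adelelift}. One caution: your assertion that connectedness and compactness of $\A/\Q$ ``immediately yield'' $K_0\cong\Z$ is too strong as stated (e.g.\ $S^2$ is compact and connected yet $K^0(S^2)\cong\Z^2$); one needs either the paper's direct argument in Remark~\ref{K0Q} ruling out nontrivial projections, or the fact that $\A/\Q$ is a one-dimensional solenoid with vanishing $\check{H}^2$, or the inductive-limit description of Section~3, to actually get $\Z$.

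For part (b), you take a genuinely different and shorter path than the paper. The paper works directly with the unitization $C_0(\A)^+$, lifts unitaries $f+\lambda$ over $\A$ using the same pre-periodic/winding-number apparatus as for $\Q$, and reads off $K_1(C^*(\A))\cong C_0(\A_f,\Z)$ by tracking the winding number fiberwise over $\A_f$; $K_0$ is handled in Remark~\ref{K0A} by showing projections are constant. Your argument instead factors $C_0(\A)\cong C_0(\R)\otimes C_0(\A_f)$, applies suspension/Bott periodicity to shift degrees, and then invokes the standard facts that $C_0(\A_f)$ is (a direct limit of) AF, so $K_1(C_0(\A_f))=0$ and $K_0(C_0(\A_f))\cong C_c(\A_f,\Z)=C_0(\A_f,\Z)$. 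This is cleaner and relies only on textbook $K$-theory, at the cost of giving no explicit description of which unitaries realize a given class; the paper's direct construction buys an explicit winding-number representative, which is the point of the elementary framework being developed. Both routes are correct and lead to the same groups.
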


A key motivation for the explicit computation of the pair $(\mathbb{A}, \mathbb{Q})$ is that it provides a crucial foundation for our future investigations into the $K$-theoretic trace formula \cite{MR4200263} in the adelic setting, as well as the related developments in higher index theory.

The paper is organized into the following sections. Section 2 begins by employing the Gelfand transform to recast the study of the group $C^*$-algebra $C^*(\mathbb{A})$ and $C^*( \mathbb{Q})$ in terms of continuous functions on the quotient space $\mathbb{A}/\mathbb{Q}$. Section 3 includes, for completeness, an alternative perspective on the $K$-theory of $C^*(\Q)$, 
making use of its inductive limit description together with the continuity of $K$-theory. Section 4 then develops two pivotal constructs based on the framework established in Section 2: pre-periodic functions on $\mathbb{R}$, systematically characterized by a homotopy invariant termed the \textit{generalized winding number}. These theoretical advances naturally lead to subsequent applications in Sections 5 and 6, where we employ the established framework to achieve a complete classification of homotopy equivalence classes for unitary elements in both $C(\mathbb{A}/\mathbb{Q})$ and $C_0(\mathbb{A})$.

\section*{Acknowledgments} 
The authors gratefully acknowledge support from the National Natural Science Foundation of China (Grant No. 12271165), and from the Science and Technology Commission of Shanghai Municipality (Grant Nos. 23JC1401900 and 22DZ2229014).
We are also indebted to Haluk Şengün for his guidance in providing relevant references in number theory and for valuable discussions regarding the alternative proof presented in Section~\ref{sec3}.

\section{The Pontryagin dual of \texorpdfstring{\(\A\)}{A} and \texorpdfstring{\(\mathbb{Q}\)}{Q}}

This section introduces the duality correspondence through the Gelfand transform, mapping group $C^*$-algebras of the adele ring $\mathbb{A}$ and its discrete subgroup $\mathbb{Q}$ into the continuous function space on their Pontryagin duals. While the duality theory for local fields has been thoroughly established in \cite{MR1344916}, a more nuanced investigation into the adele ring remains imperative for further advancements in this area. The subsequent analysis builds fundamentally on the self-duality of $\A/\mathbb{Q}$.

\begin{definition}
   The space \( L^1(\mathbb{A}) \) becomes a \( * \)-algebra when equipped with the following operation:
   \begin{align*}
       f* g(t)&=\int f(s)g(t-s)ds,\\
       f^*(t)&=\overline{f(-t)}.
   \end{align*}
Define a $C^*$-norm on $L^1(\A)$ by
\[
\|f\|=\sup\{\|\pi(f)\|:\pi\text{ is a } *-\text{representation of }L^1(\A)\}.
\]
The \textbf{group C\(^*\)-algebra} of $\A$ is the completion of $L^1(\A)$ in this norm. Similarly, C$^*(\mathbb{Q})$ can also be defined.
\end{definition}
The following propositions provided in \cite{cem} can simplify the calculation of K-theory in C$^*$-algebras:
\begin{proposition}
    If $G$ is an abelian group, then
    \[
    \text{C}^*(G)=\text{C}_0(\hat{G}).
    \]
\end{proposition}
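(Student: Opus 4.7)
The plan is to prove this via Gelfand duality, exploiting the fact that $G$ abelian forces $C^*(G)$ to be commutative, and then identifying its Gelfand spectrum with the Pontryagin dual $\hat G$.

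First, I would observe that since $G$ is abelian, convolution on $L^1(G)$ is commutative: $f\ast g = g\ast f$ by a change-of-variables argument using that translation on $G$ is an abelian group action. It follows that the enveloping $C^*$-algebra $C^*(G)$ is a commutative $C^*$-algebra. By Gelfand's representation theorem, there is an isometric $\ast$-isomorphism
\[
C^*(G) \;\cong\; C_0\bigl(\Spec\, C^*(G)\bigr),
\]
where $\Spec\, C^*(G)$ is the locally compact Hausdorff space of nonzero $\ast$-homomorphisms $C^*(G)\to\C$ endowed with the weak-$\ast$ topology. The entire problem thus reduces to exhibiting a homeomorphism $\Spec\, C^*(G)\cong \hat G$.

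Next, I would construct the bijection between characters of $C^*(G)$ and continuous unitary characters of $G$. In one direction, given $\chi\in\hat G$, the map
\[
\Phi_\chi\colon L^1(G)\to\C,\qquad \Phi_\chi(f)\;=\;\int_G f(x)\,\overline{\chi(x)}\,dx,
\]
is a $\ast$-homomorphism (one checks directly that it respects convolution and involution), is bounded by the $L^1$-norm, and hence extends to a character of $C^*(G)$. Conversely, given a character $\varphi$ of $C^*(G)$, its restriction to the dense subalgebra $L^1(G)$ is a nonzero bounded $\ast$-homomorphism, and standard arguments (using an approximate identity supplied by bump functions on $G$) produce a measurable, and then continuous, function $\chi\colon G\to\T$ such that $\varphi=\Phi_\chi$; the homomorphism property of $\varphi$ combined with translation invariance of Haar measure forces $\chi$ to be a character of $G$. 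I would then check that $\chi\mapsto\Phi_\chi$ is injective, so the correspondence is a bijection.

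Finally, I would verify that this bijection is a homeomorphism. Both the weak-$\ast$ topology on $\Spec\, C^*(G)$ and the compact-open topology on $\hat G$ can be described in terms of pointwise convergence on a suitably dense set — here one uses that $\Phi_{\chi_\alpha}(f)\to\Phi_\chi(f)$ for every $f\in C_c(G)$ is equivalent to $\chi_\alpha\to\chi$ uniformly on compacta, by a standard approximation argument with compactly supported functions. Combined with Gelfand's theorem, this yields the desired isomorphism $C^*(G)\cong C_0(\hat G)$.

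The main obstacle will be the surjectivity in the identification of characters: given an abstract character of $C^*(G)$, one must produce a genuine continuous group character of $G$. This requires the passage from a bounded $\ast$-homomorphism on $L^1(G)$ to an essentially bounded measurable function on $G$ (via Riesz-type duality or the Radon–Nikodym theorem), and then upgrading measurability to continuity using the multiplicativity relation $\chi(x+y)=\chi(x)\chi(y)$ together with translation continuity of convolution. The other steps are formal consequences of Gelfand duality.
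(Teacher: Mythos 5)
The paper does not prove this proposition; it is cited verbatim from the reference \cite{cem} as a standard fact, so there is no in-text argument to compare against. Your proposal is the standard textbook route (commutativity of $L^1(G)$ for abelian $G$, Gelfand duality, and identification of the Gelfand spectrum with the continuous unitary characters), and the overall structure is correct. The sketch appropriately isolates the genuinely nontrivial step, namely surjectivity of $\chi\mapsto\Phi_\chi$: passing from a character of $C^*(G)$ to an $L^\infty$ function via duality, extracting the multiplicativity relation $\chi(x+y)=\chi(x)\chi(y)$ a.e., and upgrading to a continuous character by convolving with a bump function, which is the classical Weil argument.

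One small imprecision worth tightening: you justify the extension of $\Phi_\chi$ from $L^1(G)$ to $C^*(G)$ by noting that it is bounded by the $L^1$-norm, but that alone is not sufficient, since the universal $C^*$-norm on $L^1(G)$ can be strictly smaller than $\|\cdot\|_1$. The correct reason is that $\Phi_\chi$ is a one-dimensional $\ast$-representation of $L^1(G)$, so by the very definition of the universal norm, $\lvert\Phi_\chi(f)\rvert\le\|f\|_{C^*(G)}$, and the extension follows. With that adjustment the argument is sound.
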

It shows that the Pontryagin dual is an effective tool for solving the problem. The proof of the Pontryagin duality for the Adele ring \(\mathbb{A}\) centers on decomposing global questions into harmonic analysis over local fields and unifying local duality properties via the restricted product structure. A primary consideration involves examining the Pontryagin dual of \(\mathbb{Q}_p\) (the field of \(p\)-adic numbers).

\begin{lemma}\label{Q_pdual}
    \(\mathbb{Q}_p\) is self-dual.
\end{lemma}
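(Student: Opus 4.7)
The plan is to construct an explicit topological isomorphism $\Phi: \Q_p \to \what{\Q_p}$ from a standard nontrivial character. First, I would fix
\[
\psi_p(x) \;=\; e^{2\pi i \{x\}_p},
\]
where $\{x\}_p \in \Z[1/p]/\Z$ denotes the $p$-adic fractional part of $x \in \Q_p$, and then define
\[
\Phi(y)(x) \;=\; \psi_p(xy), \qquad x, y \in \Q_p.
\]
The goal is then to show $\Phi$ is a bijective group homomorphism that is also a homeomorphism.

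The routine part is continuity and injectivity. Since $\ker\psi_p = \Z_p$ is open in $\Q_p$, $\psi_p$ is continuous; combined with joint continuity of multiplication on $\Q_p$, this shows $\Phi$ is a continuous homomorphism. For injectivity, if $\Phi(y) \equiv 1$ then $xy \in \Z_p$ for every $x \in \Q_p$; substituting $x = p^{-n}$ with $n$ arbitrarily large forces $y = 0$.

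The main obstacle is surjectivity. Given $\chi \in \what{\Q_p}$, I would first invoke the no-small-subgroups property of $S^1$: by continuity at $0$, the image $\chi(p^n\Z_p)$ lies in an arbitrarily small neighborhood of $1$ for large $n$, and since $p^n\Z_p$ is a subgroup this forces $\chi|_{p^n\Z_p} \equiv 1$. Thus $\chi$ descends to a character of the direct limit $\Q_p/p^n\Z_p = \bigcup_m p^{-m}\Z_p/p^n\Z_p$ of finite cyclic groups $\Z/p^{m+n}\Z$. The sequence of values $\chi(p^{-m})$ is compatible under the relation $\chi(p^{-(m-1)}) = \chi(p^{-m})^p$, and from this compatibility I would recursively read off the $p$-adic digits of an element $y \in p^{-n}\Z_p$ satisfying $\chi(p^{-m}) = \psi_p(p^{-m} y)$ for every $m$. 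Continuity of both characters together with the density of $\Z[1/p]$ in $\Q_p$ then yield $\chi = \Phi(y)$.

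To finish, I would upgrade $\Phi$ to a homeomorphism. The cleanest route is to check that $\Phi$ sends the neighborhood base $\{p^n\Z_p\}_n$ of $0 \in \Q_p$ to the annihilators of the compact sets $\{p^{-n}\Z_p\}_n$, which form a neighborhood base at the trivial character in the compact-open topology on $\what{\Q_p}$; alternatively the open mapping theorem for $\sigma$-compact locally compact abelian groups applies directly. The delicate step throughout is the inductive reconstruction of $y$ from $\chi$ in the surjectivity argument, which requires careful bookkeeping of the $p$-adic digits across all scales $p^{-m}$ simultaneously while maintaining compatibility under multiplication by $p$.
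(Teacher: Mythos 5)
Your proposal takes essentially the same route as the paper: both fix the standard additive character $\psi_p(x)=e^{2\pi i\{x\}_p}$, prove surjectivity via the no-small-subgroups argument (continuity forces $\chi$ to vanish on $p^n\mathbb{Z}_p$ for $n$ large) and then recursively reconstruct the $p$-adic digits of the representing element from the compatibility $\chi(p^{-(m-1)})=\chi(p^{-m})^p$. The only substantive difference is that you explicitly handle the topological-isomorphism (homeomorphism) step via annihilators or the open mapping theorem, which the paper leaves implicit.
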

\begin{proof}
First we define a map $\mathcal{Q}$ from $\mathbb{Q_p}$ to $\hat{\mathbb{Q}}_p$:
\begin{equation}
    y_p\rightarrow e_{y_p}, \quad \forall y_p\in \mathbb{Q}_p,
\end{equation}
where \(e_{y_p}(x_p):=e^{2\pi i\cdot \{y_p\cdot x_p\}}\), here $x_p\in \mathbb{Q}_p, \{y_p\cdot x_p\}=y_p\cdot x_p-\lfloor y_p\cdot x_p\rfloor.$ This map is clearly an injective homomorphism, and it remains to show that $\mathcal{Q}$ is also  surjective.

Given a character \(\chi: \mathbb{Q}_p \to S^1\), which naturally satisfies \(\chi(0) = 1\), according to its continuity, for any integer \(\varepsilon >0\), we have \(\delta>0 \) such that: 
\begin{equation}\label{lim}
    \lvert\chi(x_p)-1\rvert<0,\ \forall \lvert x_p\rvert_p<\delta.
\end{equation}
so there exists a positive integer \( n_{8}\) such that the following condition is satisfied:
\begin{align}
    \lvert\chi(k\cdot p^n)-1\rvert  \leq \frac{1}{8}, \forall n\geq n_8.
\end{align}
Hence, $\chi(p^{n_8})$ generates the following subgroup of \(S^1\) with diameter not greater than $\frac{1}{4}$:
\[
\{\chi(k\cdot p^{n_8})|k\in \mathbb{Z}\},
\]
implies that
\[
\chi( p^{n_8})=1.
\] 
We see that there exists an \(n_0\in \mathbb{Z}^+\), such that for every \(n\geq n_0\),  \(\chi (p^n)=1\).

Above result allow us to choose an integer $n_m$ such that:
\begin{equation}
    \chi(p^{n})= 1, \forall n>n_m, 
\end{equation}
while
\[
\chi(p^{n_m})\neq 1.
\]
Hence, there exists an integer $0\leq k_m\leq p-1$ such that:
\begin{equation}
    \chi(p^{n_m})=e^{2\pi i\frac{k_m}{p} }.
\end{equation}
Similarly,
\begin{align*}
    \chi(p^{n_m-1})&=e^{2\pi i (k_m+p\cdot k_{m-1})/{p^2}},\\
    \chi(p^{n_m-2})&=e^{2\pi i(k_m+p\cdot k_{m-1}+p^2\cdot k_{m-2})/{p^3}},\\
    &\vdots\\
    \chi(p^{n_m-l})&=e^{2\pi i(\sum_{j=0}^lp^j\cdot k_{m-j})/{p^{l+1}}},\\
    &\vdots
\end{align*}
where for any integer $j\geq 0$, $k_{m-j}$ is an integer between \(0\) and \(p-1\).

Any $p$-adic number $x_p$ can be expressed in the form  \(\sum\limits_{n=n_0}^{\infty}a_n\cdot p^n\), where $n_0\in \mathbb{Z}$, $a_n$ is an integer between \(0\) and \(p-1\), then
\begin{align}\label{cha}
    \chi(x_p)&=\chi(\sum\limits_{n=n_0}^{\infty}a_n\cdot p^n)\\
    &=\prod\limits_{n=n_0}^{\infty} \chi(p^n)^{a_n}=\prod_{n=n_0}^{\infty}e^{2\pi i\left(\sum\limits_{j=-1}^{\infty}k_{m-1-j}\cdot p^{j-n_m}\right)\cdot a_n\cdot p^n}\\
    &=e^{2\pi i\cdot(\sum_{j=-1}^{\infty}p^{j-n_m}\cdot k_{m-1-j})\cdot x_p},
\end{align}
we have
\begin{equation}
    \chi=\mathcal{Q}\left(\sum_{j=-1}^{\infty}p^{j-n_m}\cdot k_{m-1-j}\right).
\end{equation}
After showing that the mapping \(\mathcal{Q}\) is surjective, the isomorphism between $\mathbb{Q}_p$ and $\hat{\mathbb{Q}}_p$ is obtained.
\end{proof}
Furthermore, the fact that the additive group of $\mathbb{R}$ is self-dual under Pontryagin duality is a straightforward result in harmonic analysis. Building on this foundation, we can naturally extend our investigation to characterize the Pontryagin dual of the adele ring \( \mathbb{A} \). 
\begin{theorem}
        The adele ring $\A$ is self-dual.
\end{theorem}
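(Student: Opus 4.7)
The plan is to promote the local self-dualities of $\R$ and of each $\Q_p$ (Lemma~\ref{Q_pdual}) to a global self-duality on $\A$ by exploiting the restricted-product structure $\A = \R \times \prod'_{p} \Q_p$ with respect to the compact open subgroups $\Z_p$. Concretely, I would first write down a candidate map $\mathcal{A}\colon \A \to \widehat{\A}$ by
\[
\mathcal{A}(y)(x) \;=\; e^{2\pi i\, y_\infty x_\infty} \cdot \prod_{p} e^{2\pi i\,\{y_p x_p\}},
\]
for $y = (y_\infty,(y_p)_p)$ and $x = (x_\infty,(x_p)_p)$ in $\A$. Since $x_p, y_p \in \Z_p$ for almost all $p$, the product $y_p x_p$ lies in $\Z_p$ for almost all $p$, so all but finitely many factors equal $1$ and the expression is well defined. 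Continuity, the homomorphism property, and injectivity of $\mathcal{A}$ then follow coordinate-wise from the corresponding local statements.

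The substantive work is surjectivity. Given a character $\chi\colon \A \to S^1$, I would split $\chi = \chi_\infty \cdot \chi_f$ according to the decomposition $\A = \R \times \Afin$. Self-duality of $\R$ yields a $y_\infty \in \R$ realizing $\chi_\infty$, and applying Lemma~\ref{Q_pdual} to the pullback of $\chi_f$ along each coordinate embedding $\Q_p \hookrightarrow \Afin$ produces local parameters $y_p \in \Q_p$ with $e_{y_p}$ equal to the corresponding local restriction of $\chi_f$. To conclude that $y := (y_\infty,(y_p)_p)$ actually lies in $\A$, I must show that $y_p \in \Z_p$ for all but finitely many $p$. For this, continuity of $\chi$ at the origin gives a neighborhood of $0$ on which $|\chi - 1| < 1/4$; the restricted-product topology then supplies a finite set $S$ of primes such that the compact open subgroup
\[
K_S \;:=\; \bigl\{\,u=(u_\infty,(u_p)_p)\in\A : u_\infty=0,\ u_p=0 \text{ for } p\in S,\ u_p\in\Z_p \text{ for } p\notin S\,\bigr\}
\]
is contained in that neighborhood. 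The ``subgroup of $S^1$ with small diameter is trivial'' argument already used inside Lemma~\ref{Q_pdual} then forces $\chi|_{K_S}\equiv 1$. Combined with the local annihilator identity $\Z_p^{\perp}=\Z_p$ in $\Q_p$ under the pairing $(u,v)\mapsto e^{2\pi i\,\{uv\}}$, this gives $y_p\in\Z_p$ for every $p\notin S$, so $y\in\A$ and $\mathcal{A}(y)=\chi$.

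I expect the principal obstacle to be this final ``almost all'' step: correctly combining the global continuity of $\chi$, the restricted-product topology on $\A$, and the local annihilator identity $\Z_p^{\perp}=\Z_p$ to rule out nontrivial $p$-adic denominators at all but finitely many primes. Once this local-to-global bookkeeping is in place, the self-duality $\A\cong\widehat{\A}$ is immediate from the bijectivity of $\mathcal{A}$, and the map $\mathcal{A}$ is automatically a topological isomorphism by the open mapping theorem for locally compact abelian groups.
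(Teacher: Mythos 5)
Your proposal is correct and takes essentially the same approach as the paper: build the pairing $\mathcal{A}$ from the local self-dualities, reduce surjectivity to showing $y_p\in\Z_p$ for almost all $p$, and extract this from continuity of $\chi$ at $0$ in the restricted-product topology via the ``subgroup of $S^1$ with small diameter is trivial'' argument. The paper runs the final step prime-by-prime on the embedded subgroups $R_{p_n}\cong\Z_{p_n}$ rather than through a single compact open subgroup $K_S$ and the annihilator identity $\Z_p^{\perp}=\Z_p$, but these are only presentational differences.
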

\begin{proof}
Since both \(\mathbb{Q}_p\) and $\mathbb{R}$ embed into the adele ring \(\mathbb{A}\), any character $\chi$ on \(\mathbb{A}\) admits the following decomposition: 
\[
\chi(\{x_\infty,x_2,x_3,\dots\})=e^{2\pi i\cdot y_\infty\cdot x_\infty}\times\prod\limits_{p\ \text{prime}}e^{-2\pi i\cdot \{y_p\cdot x_p\}}.
\]
    where $y_\infty \in \mathbb{R},\ y_p\in \mathbb{Q}_p$ . Thus we can construct a map $\mathcal{A}: \A\rightarrow\hat{\A} $ defined by:
    \[
    y\mapsto e_y, \forall y=\{y_{\infty},y_2,y_3,\dots\}\in \A.
    \]
    The $e_y$ here is a map from $\A $ to $S^1$ :
    \[ e_y(x)=\chi(\{x_\infty,x_2,x_3,\dots\})=e^{2\pi i\cdot y_\infty\cdot x_\infty}\times\prod\limits_{p\ \text{prime}}e^{-2\pi i\cdot \{y_p\cdot x_p\}},\ \forall x =\{x_\infty,x_2,x_3,\dots\}\in \A.\]
    To prove that $\mathcal{A}$ is an isomorphism, it suffices to demonstrate that $\mathcal{A}$ is surjective, which is equivalent to for any 
    \[
    \chi(\{x_\infty,x_2,x_3,\dots\})=e^{2\pi i\cdot y_\infty\cdot x_\infty}\times\prod\limits_{p\ \text{prime}}e^{-2\pi i\cdot \{y_p\cdot x_p\}},
    \]
    only finitely many primes \textit{p} satisfy \(y_p\notin \mathbb{Z}_p\), i.e.\(\{y_{\infty},y_2,y_3,\dots\}\in \A\).
    We claim that there exists an integer $n_0$ such that for every prime number $p\geq n_0$, \[
    \chi(\{0_\infty,0_2,0_3,\dots, \mathbb{Z}_p,\dots\})=0.
    \]
    Consider the neighborhood basis of $0$ consisting of \textbf{countable  open sets} of the form:
\[
B_n = \left( -\frac{1}{n},\, \frac{1}{n} \right) \times \prod_{p \leq n} p^{n}\mathbb{Z}_p \times \prod_{p > n} \mathbb{Z}_p \quad (n \in \mathbb{N}). 
\]
Because $\chi$ is continuous, there is an $n_0\in\mathbb{N}$ such that
\[
\lvert\chi(B_{n_0})-1\rvert\leq 0.1.
\]
We assume that the n-th prime number \( p_n \) is greater than \( n_0 \), then we define a subset $R_{p_n}$of $B_n$:
\[
\{\{\underbrace{0_\infty,0_2,\dots,0_{p_{n-1}}}_{\text{all zero}},\underbrace{z_{p_n}}_{p\text{-adic integer}}, \underbrace{0_{p_{n+1},\dots}}_{\text{all zero}}\}\},
\]
which is an additive group generated by
\[
\{0_\infty,0_2,\dots,0_{p_{n-1}},1_{p_n}, 0_{p_{n+1},\dots}\}.
\]
Similarly, $\chi(R_{p_n})$ is a subgroup of $S^1$ with diameter not greater than $0.2$, so
\[
1=\chi(\{0_\infty,0_2,\dots,0_{p_{n-1}},1_{p_n}, 0_{p_{n+1},\dots}\})=e^{-2\pi i\cdot \{y_{p_n}\}},
\]
implies that \(y_{p_n}\in \mathbb{Z}_{p_n}\). Due to the arbitrariness of $p_n$, we conclude that only finitely many \( y_p \) do not belong to \( \mathbb{Z}_p \). Naturally, the sequence
 \[
 y=\{y_\infty,y_2,y_3,y_5,y_7,\dots\}\in \A,
 \]
 which proves that $\mathcal{A}$ is surjective. We finally obtained that \(\mathcal{A}\) is an isomorphism between $\mathbb{A}$ and $\hat{\mathbb{A}}$.
 \end{proof}
Since \( \mathbb{Q} \) is a discrete lattice in \( \mathbb{A} \), the characters of \(\mathbb{Q} \) can be extended to characters of \(\mathbb{A}\), so it suffices to focus on the restriction of  \(\hat{\mathbb{A}}\) to $\mathbb{Q}$.
Let $y\in \mathbb{A}$ and for any \(\alpha\in \mathbb{Q}\),
\[
e_{y}(\alpha)=e^{2\pi i\cdot y_\infty\cdot \alpha_\infty}\times\prod\limits_{p\ \text{prime}}e^{-2\pi i\cdot \{y_p\cdot \alpha_p\}}=1,
\]
where \(\{\alpha_\infty,\alpha_2,\alpha_3,\dots\}\) is the embedding of \(\alpha\) in \(\A\).
There are only finitely many $y_p\notin \mathbb{Z}_p$, so we can find a non-zero integer $N_2^y$ such that:
\[
N^y_2y_p\in \mathbb{Z}_p,\ \forall p\neq2.
\]
Then for any $k$ belongs to \(\mathbb{Z}\), 
\[
e_y(2^k\cdot N_2^y)=e^{2\pi i\cdot y_\infty\cdot 2^k\cdot N_2^y}\cdot e^{-2\pi i\cdot \{y_2\cdot 2^k\cdot N_2^y\}}=e^{2\pi i\cdot \{(y_\infty-y_2)\cdot 2^k\cdot N_2^y\}}=1, \
\]
implies that
\[
y_\infty=y_2.
\]
However, $\mathbb{R}\cap\mathbb{Q}_2=\mathbb{Q}$, which shows \(y_\infty,y_2\in \mathbb{Q}.\)

Similarly, we can conclude that :
\[
y_\infty=y_p.
\]
The variable \( y \) is of the form:
\[
\{y_\infty,(y_\infty)_2,(y_\infty)_3,(y_\infty)_5,\dots\},\ y_\infty\in \mathbb{Q},
\]
which implies that each \( y\in \A \) satisfying \( e_y(\alpha) = 1,\ \forall\alpha\in\mathbb{Q} \) is in \textbf{one-to-one correspondence} with a rational number. This establishes the following lemma:
\begin{lemma}
    The Pontryagin dual of  \(\mathbb{Q}\) as a lattice in $\A$ is $\A/\mathbb{Q}$.
\end{lemma}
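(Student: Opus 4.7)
The plan is to deduce the lemma from the general Pontryagin duality principle for closed subgroups, combined with the analysis already carried out in the paragraphs immediately preceding the statement. Recall that for any closed subgroup $H$ of an LCA group $G$ there is a natural topological isomorphism $\hat{H} \cong \hat{G}/H^{\perp}$, where $H^{\perp} := \{\chi \in \hat{G} : \chi|_H \equiv 1\}$. I would apply this with $G = \A$ and $H = \mathbb{Q}$ (diagonally embedded), then transport $\mathbb{Q}^{\perp} \subseteq \hat{\A}$ back to $\A$ along the self-duality isomorphism $\mathcal{A} : \A \xrightarrow{\sim} \hat{\A}$ from the previous theorem.

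Under this transport, the task reduces to identifying $\mathcal{A}^{-1}(\mathbb{Q}^{\perp})$ with the diagonal copy of $\mathbb{Q}$ in $\A$. The containment $\mathbb{Q} \subseteq \mathcal{A}^{-1}(\mathbb{Q}^{\perp})$ is a direct verification: for $q,\alpha \in \mathbb{Q}$ the product $q\alpha$ is rational, so the real-place exponent and the sum of $p$-adic fractional-part contributions to $e_q(\alpha)$ differ by an integer and the full product collapses to $1$. The reverse containment is the substantive part, and it is exactly the calculation performed in the discussion preceding the lemma: given a character $e_y$ vanishing on $\mathbb{Q}$, one uses that only finitely many components $y_p$ lie outside $\mathbb{Z}_p$ to clear denominators prime by prime, evaluates $e_y$ on the rationals $p^k N_p^y$ to force $y_\infty = y_p$, and then invokes $\mathbb{R}\cap \mathbb{Q}_p = \mathbb{Q}$ to conclude that this common value is rational.

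Combining these pieces yields $\hat{\mathbb{Q}} \cong \hat{\A}/\mathbb{Q}^{\perp} \cong \A/\mathbb{Q}$, which is the desired identification. The main obstacle is the reverse inclusion $\mathcal{A}^{-1}(\mathbb{Q}^{\perp}) \subseteq \mathbb{Q}$, whose proof is genuinely global: it requires simultaneous denominator clearing across all primes, combined with the arithmetic fact $\mathbb{R}\cap \mathbb{Q}_p = \mathbb{Q}$ inside $\A$. Everything else is a formal consequence of Pontryagin duality and the self-duality of $\A$, so once the annihilator is pinned down the lemma follows immediately.
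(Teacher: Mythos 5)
Your proposal is correct and takes essentially the same route as the paper. The paper phrases the reduction more informally ("characters of $\mathbb{Q}$ can be extended to characters of $\mathbb{A}$, so it suffices to focus on the restriction") rather than explicitly citing the closed-subgroup duality $\hat{H}\cong\hat{G}/H^{\perp}$, but the substantive computation — evaluating $e_y$ on rationals of the form $p^k N_p^y$ to force $y_\infty = y_p$ for each prime, then using $\mathbb{R}\cap\mathbb{Q}_p=\mathbb{Q}$ to conclude that $y$ lies on the diagonal copy of $\mathbb{Q}$ — is exactly the one you describe.
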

These results allow us to replace C$^*(\mathbb{Q})$, C$^*(\mathbb{A})$ with C$(\A/\mathbb{Q})$, C$_0(\A)$ for our investigation in the following section.

\section{A standard argument via inductive limits}\label{sec3}

There is a direct way to compute the K-theory of $C^*(\Q)$ using the continuity of K-theory for inductive limits. We recall this argument for completeness, and then proceed to present our elementary computation, which provides additional insight into the structure of $K_*(C^*(\Q))$.

Let $\mathcal{N}$ be the directed set of positive integers ordered by divisibility: $N\preceq M$ iff $N\mid M$. For $N\in\mathcal{N}$, consider the circle $X_{N}:=\R/N\Z\cong \T$, written additively. For $N\mid M$ there is a covering map
\begin{equation}\label{eq:cover}
\pi_{M\to N}: X_{M}\longrightarrow X_{N},\qquad [t]_{M}\longmapsto \Big[\tfrac{M}{N}t\Big]_{N},
\end{equation}
of degree $M/N$. The inverse system $\{X_{N},\pi_{M\to N}\}$ has inverse limit
\begin{equation}\label{eq:solenoid}
\varprojlim_{N\in\mathcal{N}} X_{N}\ =\ \Big\{(x_{N})_{N}\in \prod_{N} X_{N}\ :\ \pi_{M\to N}(x_{M})=x_{N}\ \text{ whenever } N\mid M\Big\},
\end{equation}
a compact abelian group known as the (universal) one-dimensional solenoid.

\begin{proposition}\label{prop:dual-as-solenoid}
There is a canonical topological group isomorphism
\[
\what{\Q}\ \cong\ \varprojlim_{N\in\mathcal{N}} X_{N}.
\]
\end{proposition}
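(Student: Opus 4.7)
The plan is to realize $\Q$ as a filtered colimit of cyclic subgroups, apply Pontryagin duality to turn this into a filtered limit of circles, and then identify the limiting tower with $\{X_N, \pi_{M\to N}\}$. First I would write $\Q = \varinjlim_{N \in \mathcal{N}} \tfrac{1}{N}\Z$, with connecting morphisms the inclusions $\tfrac{1}{N}\Z \hookrightarrow \tfrac{1}{M}\Z$ whenever $N \mid M$. This is immediate: every rational lies in some $\tfrac{1}{N}\Z$, and two denominators always admit a common multiple in $\mathcal{N}$.

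Second, I would invoke the standard fact that $\what{(\cdot)}$ converts filtered colimits of discrete abelian groups into filtered limits of compact abelian groups, yielding
\[
\what{\Q} \;\cong\; \varprojlim_{N \in \mathcal{N}} \what{\tfrac{1}{N}\Z}
\]
as topological groups, with right-hand transition maps obtained by restriction of characters along the inclusions. To identify each $\what{\tfrac{1}{N}\Z}$ with $X_N = \R/N\Z$, I would use the self-duality of $\R$ via the pairing $\langle s, q \rangle = e^{2\pi i s q}$: the annihilator of $\tfrac{1}{N}\Z \subset \R$ is precisely $N\Z$, so $\what{\tfrac{1}{N}\Z} \cong \R/N\Z = X_N$ canonically, with $[s]_N$ corresponding to the character $q \mapsto e^{2\pi i s q}$.

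The main bookkeeping step, and the part I expect to be the chief obstacle, is then checking that under these canonical identifications the dual of the inclusion $\tfrac{1}{N}\Z \hookrightarrow \tfrac{1}{M}\Z$ is exactly the covering map $\pi_{M\to N}$ of \eqref{eq:cover}: for $[t]_M \in X_M$ one traces the character $q \mapsto e^{2\pi i t q}$ down to its restriction on $\tfrac{1}{N}\Z$ and re-expresses the result as an element of $X_N$ in its chosen parameterization, the multiplicative factor $M/N$ arising from the rescaling between $X_M$ and $X_N$. Once the tower $\{X_N, \pi_{M \to N}\}$ is matched, the algebraic isomorphism $\what{\Q} \cong \varprojlim_N X_N$ is automatically a topological group isomorphism, since both sides are compact Hausdorff and the comparison map is continuous by construction.
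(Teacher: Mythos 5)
Your approach is genuinely different from the paper's, and in fact cleaner. The paper does not dualize the colimit $\Q = \varinjlim_N \tfrac{1}{N}\Z$ directly; instead it passes through the torsion quotient, writing $\Q/\Z = \varinjlim_N (N^{-1}\Z/\Z)$, dualizing to get $\what{\Q/\Z} \cong \varprojlim_N X_N$, and then arguing from the dualized short exact sequence $0 \to \what{\Q/\Z} \to \what{\Q} \to \what{\Z} \to 0$ that $\what{\Q} \cong \what{\Q/\Z}$ because ``the last map is trivial.'' That last step does not hold: Pontryagin duality is exact, so $\what{\Q} \to \what{\Z} \cong \T$ is surjective, not zero. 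Indeed $\what{\Q/\Z} \cong \hat{\Z}$ is totally disconnected while $\what{\Q}$ is connected (since $\Q$ is torsion-free), so they cannot be isomorphic; and the paper's identification of the finite group $\Hom(N^{-1}\Z/\Z,\T) \cong \Z/N\Z$ with the circle $X_N = \R/N\Z$ does not typecheck. Your route, dualizing $\Q = \varinjlim_N \tfrac{1}{N}\Z$ and identifying $\what{\tfrac{1}{N}\Z} \cong \R/N\Z$ via the annihilator of $\tfrac{1}{N}\Z$ under the pairing $\langle s,q\rangle = e^{2\pi i s q}$, sidesteps all of this and gives the correct statement directly.

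One small correction to your own bookkeeping forecast: with that pairing and parameterization, the dual of the inclusion $\tfrac{1}{N}\Z \hookrightarrow \tfrac{1}{M}\Z$ is the plain reduction $[t]_M \mapsto [t]_N$, with no multiplicative factor; restricting $q \mapsto e^{2\pi i t q}$ from $\tfrac{1}{M}\Z$ to $\tfrac{1}{N}\Z$ leaves $t$ alone, and passing to $\R/N\Z$ coarsens the equivalence relation. This is the degree-$M/N$ covering, since $N\Z/M\Z$ has $M/N$ elements. The formula printed in \eqref{eq:cover}, $[t]_M \mapsto [\tfrac{M}{N}t]_N$, actually has degree $(M/N)^2$ and appears to be a misprint for $[t]_M \mapsto [t]_N$; so the factor $M/N$ you anticipated will not show up once you carry the computation through, and that is the correct outcome.
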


\begin{proof}
For each $N$, the subgroup $N^{-1}\Z/\Z\subset \Q/\Z$ is cyclic of order $N$, and $\Hom(N^{-1}\Z/\Z,\T)\cong \Z/N\Z$ identifies with $X_{N}$ by sending $1/N+\Z$ to $e^{2\pi i t/N}$ with $t\in \R/N\Z$. As $\Q/\Z=\varinjlim_{N} (N^{-1}\Z/\Z)$, Pontryagin duality exchanges direct limits with inverse limits for discrete abelian groups, yielding
\(
\what{\Q/\Z}\cong \varprojlim_{N} X_{N}.
\)
Finally, since $\Q$ is divisible, the exact sequence $0\to \Z\to \Q\to \Q/\Z\to 0$ dualizes to $0\to \what{\Q/\Z}\to \what{\Q}\to \what{\Z}\to 0$ with $\what{\Z}\cong \T$ and the last map trivial; hence $\what{\Q}\cong \what{\Q/\Z}$ canonically. Thus $\what{\Q}\cong \varprojlim_{N} X_{N}$.
\end{proof}

By Gelfand duality, Proposition~\ref{prop:dual-as-solenoid} yields
\begin{equation}\label{eq:direct-limit-algebra}
C(\what{\Q})\ \cong\ \varinjlim_{N\in\mathcal{N}}\ C(X_{N}),
\end{equation}
where for $N\mid M$ the connecting $*$-homomorphism
\begin{equation}\label{eq:connecting-map}
\phi_{N,M}: C(X_{N})\longrightarrow C(X_{M}),\qquad \phi_{N,M}(f)=f\circ \pi_{M\to N},
\end{equation}
is the pullback along the degree-$M/N$ covering \eqref{eq:cover}. Since $X_{N}\cong \R/N\Z$, one may equally write
\(
C(X_{N})\cong C(\R/N\Z),
\)
which gives the desired description
\begin{equation}\label{eq:cstarQ-as-limit}
\cstar(\Q)\ \cong\ C(\what{\Q})\ \cong\ \varinjlim_{N} C(\R/N\Z).
\end{equation}

Since $C^*(\mathbb{Q})$ itself can be realized as an inductive (direct) limit of $C^*$-algebras, 
the continuity of $K$-theory with respect to inductive limits \cite[Ch.~6]{rordam} 
implies that
\[
K_{*}\big(\cstar(\Q)\big)\ \cong\ \varinjlim_{N}\, K_{*}\big(C(X_{N})\big).
\]

For each $N$, $X_{N}\cong \T$, so
\[
K_{0}\big(C(X_{N})\big)\cong \Z\cdot[1],\qquad
K_{1}\big(C(X_{N})\big)\cong \Z\cdot[u_N],
\]
where $[1]$ is the class of the unit and $[u_N]$ is the class of the canonical unitary $u_N(z)=z$ on $\T$ (under a fixed identification $X_{N}\cong \T$).

For $N \mid M$, the connecting map $\phi_{N,M}$ induces the identity on $K_0$, 
while on $K_1$ it corresponds to multiplication by $M/N$ under the standard 
identifications $K_0(C(X_N))\cong \Z$ and $K_1(C(X_N))\cong \Z$. 
In other words, $(\phi_{N,M})_*([1])=[1]$ and $(\phi_{N,M})_*([u_N])=(M/N)\,[u_M]$.

Combining the description of the connecting maps on $K$-theory with the continuity of $K$-theory under inductive limits, 
we can now state the final computation of the $K$-groups of $C^*(\mathbb{Q})$ in a concise form:

\begin{theorem}\label{thm:main}
For the additive discrete group $\Q$,
\begin{align*}
K_{0}\big(\cstar(\Q)\big)
 &\cong \varinjlim_{N}\ (\Z,\ \id)\ \cong\ \Z,\\[0.3em]
K_{1}\big(\cstar(\Q)\big)
 &\cong \varinjlim_{N}\ \big(\Z,\ \times\tfrac{M}{N}\big)\cong \Q.
\end{align*}
Under the identification $\cstar(\Q)\cong \varinjlim_{N} C(\R/N\Z)$, the generator of $K_{0}$ is the class $[1]$, 
and the isomorphism $K_{1}\cong \Q$ arises naturally from the corresponding direct-limit construction.

\end{theorem}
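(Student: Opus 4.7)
The plan is to apply continuity of $K$-theory under inductive limits to the presentation $\cstar(\Q)\cong \varinjlim_N C(X_N)$ from \eqref{eq:cstarQ-as-limit}, and then identify the resulting direct limits of copies of $\Z$ with the claimed groups.

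First I would invoke \cite[Ch.~6]{rordam} to exchange $K_*$ with $\varinjlim_N$, so that
\[
K_i\bigl(\cstar(\Q)\bigr)\ \cong\ \varinjlim_N K_i\bigl(C(X_N)\bigr),\qquad i=0,1,
\]
with transition maps $(\phi_{N,M})_*$. The identifications $K_0(C(X_N))\cong \Z\cdot[1]$ and $K_1(C(X_N))\cong \Z\cdot[u_N]$ have already been recorded, along with the statements $(\phi_{N,M})_*([1])=[1]$ and $(\phi_{N,M})_*([u_N])=(M/N)[u_M]$. The first is immediate from $\phi_{N,M}(1)=1\circ\pi_{M\to N}=1$; the second is the main technical step, which I would verify concretely by taking $u_N(t)=e^{2\pi i t/N}$ on $X_N=\R/N\Z$ and computing
\[
u_N\circ \pi_{M\to N}(s)\ =\ e^{2\pi i (M/N)s/M}\ =\ u_M(s)^{M/N},\qquad s\in X_M,
\]
so that its class in $K_1(C(X_M))$ is $(M/N)[u_M]$ as required.

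With these connecting maps in hand, the $K_0$ computation is immediate: the constant system $(\Z,\id)$ has direct limit $\Z$, generated by the common class $[1]$. For $K_1$, I would exhibit the isomorphism $\Psi: \varinjlim_N (\Z,\times (M/N))\to \Q$ explicitly by defining stage maps $\psi_N(n):= n/N$. Compatibility follows from
\[
\psi_M\bigl((M/N)n\bigr)\ =\ \tfrac{(M/N)n}{M}\ =\ \tfrac{n}{N}\ =\ \psi_N(n),
\]
so $\Psi$ is well-defined by the universal property; surjectivity is clear since $p/q=\psi_q(p)$, and injectivity follows from $n/N=0 \Rightarrow n=0$ in $\Z$.

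The hard part, such as it is, has already been absorbed into the paragraph preceding the theorem: the geometric fact that pullback along the covering $\pi_{M\to N}$ acts as multiplication by the covering degree $M/N$ on the Bott generator of $K_1(C(\T))$. Once that observation is secured, the remainder of the argument reduces to standard continuity of $K$-theory for $C^*$-algebra inductive limits combined with the elementary identification of a directed system of copies of $\Z$ with either $\Z$ or $\Q$.
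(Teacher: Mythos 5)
Your proposal follows essentially the same route the paper itself takes: invoke continuity of $K$-theory for the presentation $\cstar(\Q)\cong\varinjlim_N C(X_N)$, identify the connecting maps as the identity on $K_0$ and as multiplication by the covering degree on $K_1$, and read off the direct limits $\Z$ and $\Q$. The paper records these facts in the paragraphs preceding the theorem without carrying out the verification; your proposal supplies that detail, and the explicit stage maps $\psi_N(n)=n/N$ are the right way to identify the $K_1$ limit with $\Q$.

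One computational wrinkle is worth flagging. With the covering map as written in \eqref{eq:cover}, namely $\pi_{M\to N}([t]_M)=\bigl[\tfrac{M}{N}t\bigr]_N$, the actual degree is $(M/N)^2$, not $M/N$, and a direct substitution gives
\[
u_N\circ\pi_{M\to N}(s)=e^{2\pi i(M/N)s/N}=e^{2\pi i Ms/N^2}=u_M(s)^{(M/N)^2}.
\]
Your display $u_N\circ\pi_{M\to N}(s)=e^{2\pi i(M/N)s/M}$ contains a slip (it plugs $(M/N)s$ into $u_M$ rather than $u_N$), but it coincidentally lands on $e^{2\pi is/N}$, which is precisely the result one obtains from the natural reduction map $[t]_M\mapsto[t]_N$; that map \emph{is} the degree-$M/N$ covering, and it does satisfy $u_N\circ\pi_{M\to N}=u_M^{\,M/N}$. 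So the formula in \eqref{eq:cover} appears to contain a typo, and the computation you meant to do is the correct one for the intended covering. The conclusion is unaffected in either reading, since the directed system $(\Z,\times(M/N)^2)$ also has limit $\Q$, now via $\psi_N(n)=n/N^2$.
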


In summary, by exploiting the continuity of $K$-theory with respect to inductive limits, 
we have obtained the $K$-groups of $C^*(\mathbb{Q})$ in a concise and conceptually straightforward manner. 
Having established this independent argument, we now turn to a direct, elementary computation, 
which establishes a connection between the topological properties of elements in $C^*(\mathbb{Q})$ and their equivalence classes in $K$-theory, 
and can be readily adapted to compute K-theory of the $C^*$-algebra of the adele ring $\A$.

\section{Introduction to pre-periodic functions}

In this section,  we introduce the concept of pre-periodic functions. After proving the existence of the lifts of pre-periodic functions, we define a homotopy invariant termed the generalized winding number. This invariant plays a crucial role in classifying homotopy equivalence classes, laying the groundwork for K-theoretic computations.

\begin{definition}
    Let \( f \in C(\mathbb{R}, S^1) \) and suppose that it satisfies the following properties. For any \( \varepsilon > 0 \), there exists \( N\in \mathbb{Z}^+ \) such that for all \( k \in \mathbb{Z} \) and \( x \in \mathbb{R} \), the following condition holds:
\begin{equation}
    \lvert f(x + kN) - f(x) \rvert < \varepsilon.
\end{equation}
Such functions are called pre-periodic functions.

\end{definition}
Functions with rational periods are naturally pre-periodic functions.The following example is used to illustrate the unique characteristics of pre-periodic functions compared to periodic functions.

\begin{example}
Take a function on \(\mathbb{R}\) in a recursive form, given by:
\[ 
f(x) = \begin{cases} 
\frac{1}{3} x, \\\hspace{21em}x\in[-1,1), \\
\frac{3^n + \frac{1}{3^{n+2}}}{3^n - \frac{1}{3^{n+1}}} \left( f(x + 2 \times 3^n) + \frac{1}{2} \left( 3^n - \frac{1}{3^{n+1}} \right) - \frac{1}{2} \left( 3^{n+1} - \frac{1}{3^{n+2}} \right) \right), \\\hspace{21em}x\in [-3^{n+1},-3^n), \\[2ex] 
\frac{3^n + \frac{1}{3^{n+2}}}{3^n - \frac{1}{3^{n+1}}} \left( f(x - 2 \times 3^n) + \frac{1}{2} \left( 3^n - \frac{1}{3^{n+1}} \right) \right) + \frac{1}{2} \left( 3^n - \frac{1}{3^{n+1}} \right), \\\hspace{21em}x\in [3^{n},3^{n+1}).
\end{cases}
\]

For any $n\in \mathbb{N},x\in [-3^n,3^n)$, it can be obtained through calculation that:
\begin{align*}
    \left| f(x + 2 \times 3^n) - f(x)-3^n \right| < \frac{1}{3^{n+1}},\\
    \left| f(x) - f(x- 2 \times 3^n)-3^n \right| < \frac{1}{3^{n+1}}.
\end{align*}
Furthermore, we have:
\[
\left| f(x + 2 \times 3^nk) - f(x) -  3^nk \right| <  \frac{1}{2}\times\frac{1}{3^n} , \forall n\in \mathbb{N}, k\in \mathbb{Z}, x\in [-3^n,3^n).
\]
Thus, $e^{2\pi i f(x)}$ is a pre-periodic function. By mathematical induction, we can prove that all points where $f$ is differentiable with derivative $\frac{1}{2}$ belong to the interval $(-1, 1)$, indicating that $e^{2\pi i f(x)}$ is not a periodic function.

Figure \ref{fig:sample_image_a} intuitively shows the ``approximate periodicity" of pre-periodic function. In fact, the function on \([3, 9],[-9,3]\) is obtained by vertically stretching the function on \([-3, 3]\) by a factor of $\frac{3^4+1}{3^4-3}$ and then splicing them together.

\begin{figure}[h!]
    \centering
    \includegraphics[width=\textwidth]{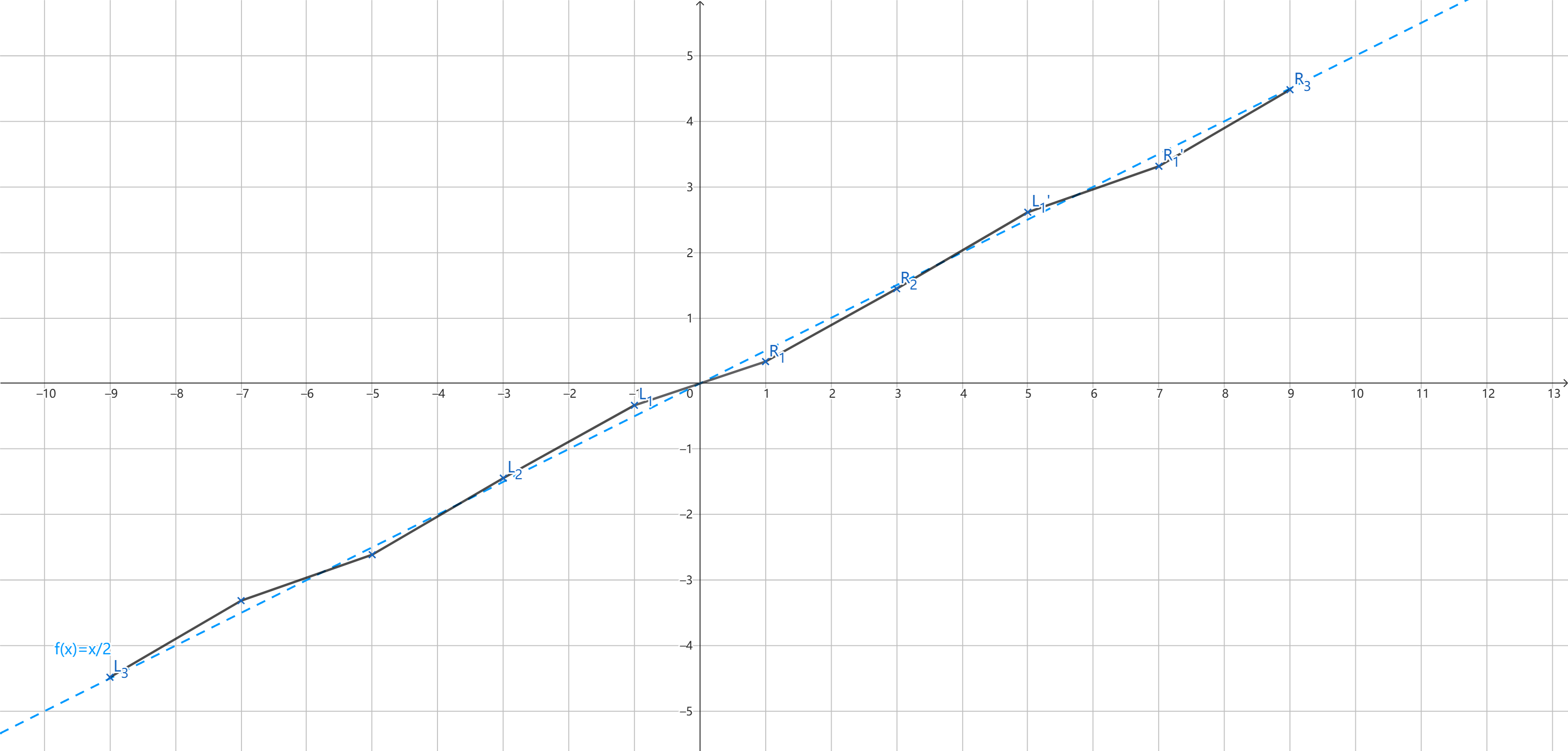}
    \caption{The graph of \( f(x) \) within the interval \([-9, 9]\).
}
    \label{fig:sample_image_a}
\end{figure}
\FloatBarrier
\end{example}

The following lemma is immediate.
\begin{lemma}\label{unc}
A pre-periodic function is uniformly continuous.
\end{lemma}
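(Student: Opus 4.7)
The plan is to reduce uniform continuity on all of $\mathbb{R}$ to ordinary (and hence uniform) continuity on a single compact interval, paying the cost of translation with the pre-periodicity hypothesis. Fix $\varepsilon>0$. I would first apply the definition of pre-periodicity to $\varepsilon/3$ to produce an $N\in\mathbb{Z}^+$ such that
\[
|f(x+kN)-f(x)|<\varepsilon/3\qquad\text{for every }k\in\mathbb{Z},\ x\in\mathbb{R}.
\]
Since $f$ is continuous on the compact set $[-N,2N]$, it is uniformly continuous there, so I can choose $\delta\in(0,N)$ with $|f(s)-f(t)|<\varepsilon/3$ whenever $s,t\in[-N,2N]$ and $|s-t|<\delta$.

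Next, given any $x,y\in\mathbb{R}$ with $|x-y|<\delta$, I would pick the unique integer $k$ with $x-kN\in[0,N)$. Because $|y-kN-(x-kN)|=|y-x|<\delta<N$, it follows that $y-kN\in(-N,2N)$, so both translated points lie in $[-N,2N]$. Writing
\[
|f(x)-f(y)|\le|f(x)-f(x-kN)|+|f(x-kN)-f(y-kN)|+|f(y-kN)-f(y)|,
\]
the outer two terms are bounded by $\varepsilon/3$ by pre-periodicity (applied with $-k$), and the middle term is bounded by $\varepsilon/3$ by the compact-interval uniform continuity. Summing yields $|f(x)-f(y)|<\varepsilon$, which is exactly uniform continuity of $f$ on $\mathbb{R}$.

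There is no genuine obstacle here; the only point to be mindful of is choosing the working interval generously enough ($[-N,2N]$ rather than $[0,N]$) so that once $x-kN\in[0,N)$ is fixed, the nearby point $y-kN$ is guaranteed to remain inside it, avoiding any case analysis about boundary effects. The argument is entirely metric and uses nothing about the target $S^1$ beyond the ambient Euclidean metric, so the same reasoning would also apply verbatim to pre-periodic functions valued in any metric space.
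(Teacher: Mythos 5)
Your argument is correct and is exactly the standard "reduce to a compact fundamental domain via translation" proof one would expect here; the paper itself does not spell it out (it declares the lemma immediate), so your write-up simply supplies the short argument that was left implicit. The choice of the enlarged window $[-N,2N]$ to absorb the $\delta$-perturbation of $y-kN$ is the right care to take, and the three-term triangle inequality with $\varepsilon/3$ budgeting closes it cleanly.
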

This property enables these functions to be  lifted to \(C(\mathbb{R,\mathbb{R}})\).
\begin{lemma}\label{lift}
    Let the map \( p: \mathbb{R} \to S^1 \) be defined by \( p(x) = e^{2\pi ix} \). For any pre-periodic function \( f \in C(\mathbb{R}, S^1) \), and for any \( e \in p^{-1}(f(r)) \), there exists a function \( \tilde{f} \in C(\mathbb{R}, \mathbb{R}) \) such that the following conditions hold:
\begin{enumerate}
    \item \( \tilde{f}(r) = e \),
    \item \( p \circ \tilde{f} = f \).
\end{enumerate}
\end{lemma}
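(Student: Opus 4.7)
The plan is to carry out the classical path-lifting construction for the covering $p: \mathbb{R}\to S^1$, using the uniform continuity of $f$ supplied by Lemma~\ref{unc}. Since $\mathbb{R}$ is simply connected, abstract existence of such a lift is guaranteed, but an explicit piecewise formula is useful for the generalized winding number developed in subsequent sections. Pre-periodicity enters only indirectly, through the uniform continuity it enforces.

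First I would fix, via Lemma~\ref{unc}, a $\delta>0$ such that $|x-y|<\delta$ implies $|f(x)-f(y)|<1$. Because $|f(x)|=|f(y)|=1$, this forces $|f(y)/f(x)-1|<1$, so $f(y)/f(x)$ lies in the open right half-plane, on which the principal branch of $\log$ is continuous and takes purely imaginary values for unit-modulus arguments. Next, I would choose a bi-infinite partition $\cdots<x_{-1}<x_0=r<x_1<\cdots$ with $x_{i+1}-x_i<\delta$ and $x_i\to\pm\infty$. Setting $\tilde{f}(r):=e$, I would define inductively on $[x_i,x_{i+1}]$ (and symmetrically to the left)
\[
\tilde{f}(x)\;:=\;\tilde{f}(x_i)+\frac{1}{2\pi i}\,\log\!\frac{f(x)}{f(x_i)}.
\]
Since $\log(f(x)/f(x_i))$ is purely imaginary, $\tilde{f}$ is real-valued.

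It then remains to verify three things, all of which are direct. First, $p\circ\tilde{f}=f$, since $p\bigl(\tilde{f}(x_i)+\tfrac{1}{2\pi i}\log(f(x)/f(x_i))\bigr)=f(x_i)\cdot f(x)/f(x_i)=f(x)$ under the inductive hypothesis $p(\tilde{f}(x_i))=f(x_i)$. Second, continuity at each breakpoint $x_i$ is automatic because the formulas from both sides evaluate to $\tilde{f}(x_i)$ there. Third, $\tilde{f}(r)=e$ by construction. There is no genuine obstacle: the countable partition exhausts $\mathbb{R}$, the inductive step never revisits previously defined values, and no monodromy arises because the construction is purely local and $\mathbb{R}$ is simply connected. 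In effect, the only input beyond standard covering-space ideas is Lemma~\ref{unc}, which converts pre-periodicity into the uniform continuity that makes the partition argument succeed.
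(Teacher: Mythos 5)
Your proposal is correct and takes essentially the same route as the paper: both invoke Lemma~\ref{unc} to get a small-mesh partition of $\mathbb{R}$ and then build the lift interval-by-interval, matching values at the endpoints. The only cosmetic difference is that you make the local lift explicit via the principal logarithm on the right half-plane, whereas the paper phrases the same step as ``$f$ restricted to $I_i$ is not surjective, so a local lift exists'' and also tacks on a short uniqueness remark that the statement does not actually require.
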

\begin{proof}
    Without loss of generality, we may assume \( r = 0 \). By Lemma \ref{unc}, take \( \varepsilon = 1 \). There exists \( m \in \mathbb{N} \) such that for all \( |x - y| \leq \frac{1}{m} \), we have \( |f(x) - f(y)| < 1 \).

Divide \( \mathbb{R} \) into countably many intervals \( \cdots, I_{-1}, I_0, I_1, \cdots \) where \( I_i = \left[\frac{i}{m}, \frac{i+1}{m}\right] \).

If \( f \) restricted to \( I_i \) is not surjective, we can successively define the lifts \( \tilde{f}_i \) such that:
\begin{itemize}
    \item \( \tilde{f}_0(0) = e \),
    \item \( \tilde{f}_i\left(\frac{i}{m}\right) = \tilde{f}_{i-1}\left(\frac{i}{m}\right) \) for all \( i \in \mathbb{Z} \).
\end{itemize}

Combine each \( \tilde{f}_i \) to form the lift \( \tilde{f} \) of \( f \).

\textbf{Uniqueness:} If there exists \( \tilde{f}' \) such that
\begin{enumerate}
    \item \( \tilde{f}'(0) = e \),
    \item \( p \circ \tilde{f}' = f \),
\end{enumerate}
then \( p(\tilde{f}' - \tilde{f}) \equiv 1\), \( \tilde{f}' - \tilde{f} \)\ is identically zero.
\end{proof}
The following lemma establishes the continuity of lifting: For any two functions that are sufficiently close in their original function space, there exist corresponding lifts in the target space that preserve this proximity. 
\begin{lemma}\label{lift8}
Let \( \tilde{f} \) be a lift of the pre-periodic function \( f \). Then, for \( \varepsilon \in [0, \frac{1}{8}) \), if the pre-periodic function \( g \) satisfies \( |g - f| < \varepsilon \), then there exists a unique lift \( \tilde{g} \) of \( g \) satisfies \( |\tilde{g} - \tilde{f}| < \frac{\varepsilon}{4}\).
\end{lemma}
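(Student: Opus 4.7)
The plan is to reduce to the case of a function taking values in a small arc around $1 \in S^1$. Specifically, I would form the pointwise quotient $h := g\,\overline{f} : \mathbb{R} \to S^1$. Since $|f(x)| = 1$, we have $|h(x) - 1| = |g(x) - f(x)| < \varepsilon < 1/8$, so the image of $h$ lies in the open arc $\{w \in S^1 : |w - 1| < 1/8\}$, which in particular avoids $-1$.

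Next, I would use the principal branch of the logarithm on this arc to define a continuous canonical lift $\tilde{h}(x) := \frac{1}{2\pi i}\,\mathrm{Log}\, h(x)$, so $\tilde{h}: \mathbb{R} \to (-1/2, 1/2)$ and $p \circ \tilde{h} = h$. Then set $\tilde{g} := \tilde{f} + \tilde{h}$; this is continuous, and $p \circ \tilde{g} = (p \circ \tilde{f})(p \circ \tilde{h}) = f\,h = g$, so $\tilde{g}$ is a lift of $g$. The key estimate $|\tilde{h}(x)| < \varepsilon/4$ follows by writing $h(x) = e^{2\pi i \theta}$ with $\theta \in (-1/2, 1/2)$ and observing $|h(x) - 1| = 2|\sin(\pi\theta)|$. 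The elementary bound $\sin(\pi t) \geq 2t$ on $[0, 1/2]$ (immediate from concavity of $\sin$ on $[0, \pi/2]$ together with the endpoint values) then gives $4|\theta| \leq 2|\sin(\pi\theta)| < \varepsilon$, so $|\tilde{g} - \tilde{f}| = |\tilde{h}| < \varepsilon/4$.

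For uniqueness, any two lifts of $g$ differ by a continuous $\mathbb{Z}$-valued function on $\mathbb{R}$, hence by a constant integer. If $\tilde{g}'$ is another lift of $g$ satisfying $|\tilde{g}' - \tilde{f}| < \varepsilon/4$, then $|\tilde{g}' - \tilde{g}| < \varepsilon/2 < 1$, forcing $\tilde{g}' - \tilde{g} = 0$.

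There is no serious obstacle. The only mildly delicate point is ensuring that $\tilde{h}$ is well-defined and continuous via the principal branch, which works precisely because $\varepsilon < 1/8$ keeps $h$ uniformly away from the branch cut at $-1$; and the passage from the chord bound $|h - 1| < \varepsilon$ to the arc-length bound $|\tilde{h}| < \varepsilon/4$ rests entirely on the sine inequality above. Pre-periodicity of $g$ is used only through Lemma \ref{lift} to make sense of ``a lift'' in the intended global sense; the construction and the two estimates themselves are purely local on $S^1$.
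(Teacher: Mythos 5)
Your proof is correct, and it takes a genuinely cleaner route than the paper's. The paper applies the principal-argument estimate only at the single point $0$ to pin down $\tilde g(0)$ (invoking Lemma~\ref{lift} for existence of the global lift), and then argues by contradiction: if $|\tilde g - \tilde f|$ ever exceeded $\varepsilon/4$, then since $\tilde g(x)$ and $\tilde f(x) + \tfrac{1}{2\pi}\arg(g(x)f(x)^{-1})$ are lifts of the same value differing by a nonzero integer, the gap would be $> 1 - \varepsilon$, and the intermediate value theorem would produce a point where $|\tilde g - \tilde f| = 1/2$, i.e.\ $g = -f$, contradicting $|g - f| < 1/8$. You instead build $\tilde g$ explicitly and globally as $\tilde f + \tilde h$ with $\tilde h = \tfrac{1}{2\pi i}\,\mathrm{Log}(g\bar f)$, which makes the bound $|\tilde g - \tilde f| < \varepsilon/4$ immediate everywhere at once — no connectedness argument, and existence of the lift of $g$ drops out as a byproduct rather than being cited from Lemma~\ref{lift}. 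The chord-to-arc estimate is the same in both proofs, just packaged as $\arcsin u \le \tfrac{\pi}{2}u$ on $[0,1]$ in the paper versus $\sin(\pi t) \ge 2t$ on $[0,1/2]$ in yours. One small remark: contrary to your closing sentence, pre-periodicity of $g$ is not used at all in your argument (only $\tilde f$ needs to exist); your lemma is therefore slightly more general than stated, which is harmless but worth noting. Your uniqueness argument (lifts on the connected space $\mathbb{R}$ differ by a constant integer, and $\varepsilon/2 < 1$ forces it to be zero) is also more direct than the paper's reliance on the pointwise uniqueness clause of Lemma~\ref{lift}.
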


\begin{proof}
    Let the principal argument function on \( S^1 \) be \( \text{arg}: S^1 \to (-\pi, \pi] \). For any \(x\in S^1\) such that the distance between $x$ and $1$ is less than 1, we have
\begin{align*}
\left| \arg(x) \right| &= \left| \arg(x) - \arg(1) \right| \\
&= 2 \arcsin\left( \frac{|x - 1|}{2} \right) \\
&< \frac{\pi}{2} |x - 1|
\end{align*}
Let $t_0=\frac{1}{2\pi} \arg \left(g(0) \cdot f(0)^{-1} \right)+\tilde{f}$, $p(t_0)=e^{2\pi i t_0}=g(0)$, and

\begin{align*}
\left| t_0-\tilde{f} \right| &= \left| \frac{1}{2\pi} \arg \left(g(0) \cdot f(0)^{-1} \right) \right| \\
&= 2 \arcsin\left( \frac{|g(0) \cdot f(0)^{-1} - 1|}{2} \right) \\
&< \frac{1}{4} |g(0) \cdot f(0)^{-1} - 1|\\
&< \frac{\varepsilon}{4}.
\end{align*}

Thus, there exists a unique lift \( \tilde{g} \) such that \( \tilde{g}(0) = t_0 \).\\
Now we prove by contradiction. Suppose that there exists $x \in \mathbb{R}$ such that $|\tilde{g}(x) - \tilde{f}(x)| > \frac{\varepsilon}{4}$. Let

$$
t_1 = \tilde{f}(x) + \frac{1}{2\pi} \arg (g(x) \cdot f(x)^{-1}),
$$
and
$$
p(t_1) = p(\tilde{g}(x))=g(x), \quad t_1 \neq \tilde{g}(x),
$$
so
$$
|\tilde{g}(x) - \tilde{f}(x)| > 1 - \varepsilon > \frac{7}{8}.
$$
We also have
$$
|\tilde{g}(0) - \tilde{f}(0)|<\frac{\varepsilon}{4}.
$$
Thus, there exists $x_0 \in \mathbb{R}$ such that
$$
|\tilde{g}(x_0) - \tilde{f}(x_0)|=\frac{1}{2}\Rightarrow |f(x_0)-g(x_0)|=1.
$$
This leads to a contradiction.
\end{proof}
With the lifting of pre-periodic functions, we can define the following generalized winding number as a homotopy invariant. 
\begin{theorem}\label{gwd}
f is a pre-periodic function, then the following limit exists and is a rational number:
$$
\lim_{x\to+\infty}\frac{\tilde{f}(x)-\tilde{f}(-x)}{2x}.
$$
This limit is called the generalized winding number of \(f\), denoted by \(\#f\).
\end{theorem}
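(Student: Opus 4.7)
The plan is to exploit pre-periodicity to extract, for a suitably small $\varepsilon$, an integer $m$ such that each translate of $\tilde{f}$ by $kN$ shifts by almost exactly $km$, and then show that $\tilde{f}(x)-(m/N)x$ is bounded on $\mathbb{R}$, forcing the limit to equal $m/N\in\mathbb{Q}$.

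First I would fix $\varepsilon\in(0,1/8)$ and, by pre-periodicity, choose $N\in\mathbb{Z}^+$ with $|f(x+kN)-f(x)|<\varepsilon$ for every $x\in\mathbb{R}$ and every $k\in\mathbb{Z}$. Using Lemma~\ref{lift} I take a lift $\tilde{f}$, and for each $k\in\mathbb{Z}$ I introduce the continuous function
\[
G_k(x):=\tilde{f}(x+kN)-\tilde{f}(x).
\]
Since $e^{2\pi i G_k(x)}=f(x+kN)\overline{f(x)}$ lies within distance $\varepsilon$ of $1\in S^1$, the argument estimate used in the proof of Lemma~\ref{lift8} gives $|G_k(x)-n|<\varepsilon/4$ for the nearest integer $n$. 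Because $G_k$ is continuous and $\varepsilon/4<1/2$, this nearest integer is independent of $x$; call it $m_k$. Thus $|G_k(x)-m_k|<\varepsilon/4$ uniformly in $x\in\mathbb{R}$ and $k\in\mathbb{Z}$.

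Next I would establish additivity $m_{k+l}=m_k+m_l$ using the identity $G_{k+l}(x)=G_l(x+kN)+G_k(x)$: the triangle inequality gives $|m_{k+l}-(m_k+m_l)|<3\varepsilon/4<1/2$, which forces equality of integers. Hence $m_k=km_1$ for all $k\in\mathbb{Z}$, and we obtain the crucial uniform bound
\[
|\tilde{f}(x+kN)-\tilde{f}(x)-km_1|<\varepsilon/4,\qquad\forall\,x\in\mathbb{R},\ k\in\mathbb{Z}.
\]
Setting $h(x):=\tilde{f}(x)-(m_1/N)x$, continuity of $\tilde{f}$ bounds $h$ on $[0,N]$, and for any $x=kN+r$ with $r\in[0,N)$ the above inequality yields $|h(x)-h(r)|<\varepsilon/4$; therefore $h$ is uniformly bounded on $\mathbb{R}$. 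Dividing by $x$ gives $\tilde{f}(x)/x\to m_1/N$ as $|x|\to\infty$, and hence $(\tilde{f}(x)-\tilde{f}(-x))/(2x)\to m_1/N\in\mathbb{Q}$.

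The main obstacle I anticipate is the uniform propagation of the integer approximation across all $k$: a naive telescoping $G_k=\sum_{j=0}^{k-1}G_1(\,\cdot\,+jN)$ accumulates error linearly in $k$ and fails for large $k$. The essential point is that the pre-periodicity hypothesis supplies the bound $|f(x+kN)-f(x)|<\varepsilon$ directly for every $k$ simultaneously, so each $G_k$ individually stays within $\varepsilon/4$ of an integer; additivity of $k\mapsto m_k$ then identifies all of these integers with multiples of $m_1$, producing the uniform estimate that drives the boundedness of $h$ and, with it, both the existence of the limit and its rationality.
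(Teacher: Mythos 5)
Your proof is correct and follows essentially the same strategy as the paper: extract from pre-periodicity an integer $n_k$ with $|\tilde{f}(x+kN)-\tilde{f}(x)-n_k|$ uniformly small, show $n_k=kn_1$ by a triangle-inequality/additivity argument, and identify the limit as $n_1/N$. Your presentation is slightly tighter at two points — the cocycle identity $G_{k+l}(x)=G_l(x+kN)+G_k(x)$ makes the additivity of $k\mapsto m_k$ transparent, and the boundedness of $h(x)=\tilde{f}(x)-(m_1/N)x$ gives a clean justification of the final passage to the limit, which the paper handles somewhat informally with a floor-function manipulation — but these are refinements of the same argument rather than a different route.
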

\begin{proof}
    Because \(f\) is a pre-periodic function, for \(\varepsilon =\frac{1}{8}\) we have $N_{\varepsilon}\in\mathbb{N}$ such that the following equation holds:
    $$
    |f(x+kN_\varepsilon)-f(x)|<\frac{1}{8},\qquad \forall k\in \mathbb{Z}, x\in \mathbb{R}.
    $$
Let \( f^k(x) = f(x + kN_{\varepsilon}) \), and define \( f^0(x) = f(x) \). Consider the lift \( \tilde{f}(x) \) of \( f(x) \), by Lemma \ref{lift8}, there exists a lift \( \widetilde{f^k} \) of \( f^k \) such that
\[
|\widetilde{f^k}(x) - \tilde{f}(x)| < \frac{1}{32}.
\]
Note that \( p(\tilde{f}(x + kN_{\varepsilon})) = f^k(x) \), \(\tilde{f}(x + kN_{\varepsilon})\) and \(\widetilde{f^k}(x)\) differ by an integer. Therefore, there exists an integer \( n_k \) such that
\[
|\tilde{f}(x + kN_{\varepsilon}) - n_k - \tilde{f}(x)| < \frac{1}{32}.
\]

Using mathematical induction, we would like to prove that \( n_k = kn_1 \), that is
\begin{enumerate}
    \item \( n_1 = 1 \cdot n_1 \), this is trivially satisfied,
    \item Suppose \( n_k = kn_1 \). 
\end{enumerate}
Consider the following three equations:
\begin{align*}
        |\tilde{f}(x + kN_{\varepsilon}&) - \tilde{f}(x) - n_k| < \frac{1}{32}. \\
    |\tilde{f}(x + (k+1)N_\varepsilon) &- \tilde{f}(x + kN_\varepsilon) - n_1| < \frac{1}{32},\\
|\tilde{f}(x + (k+1)&N_\varepsilon) - \tilde{f}(x) - n_{k+1}| < \frac{1}{32}.
 \end{align*}
 So \(|n_{k+1}-n_k-n_1|<\frac{3}{32}\) implies \( n_{k+1} = n_k + n_1 = (k+1)n_1 \). Finally,
\[
\lim_{x \to +\infty} \frac{\tilde{f}(x) - \tilde{f}(-x)}{2x} = \lim_{x \to +\infty} \frac{\tilde{f}\left(\lfloor \frac{x}{N_\varepsilon}\rfloor \right) - \tilde{f}\left(-\lfloor \frac{x}{N_\varepsilon}\rfloor \right)}{2x} = \lim_{x \to +\infty} \frac{2\lfloor \frac{x}{N_\varepsilon}\rfloor \cdot n_1}{2x} = \frac{n_1}{N_\varepsilon}.
\]
This limit does not depend on the choice of the lift.
\end{proof}
The generalized winding number possesses properties analogous to those of the classical winding number. 
\begin{proposition}\label{prop}
    The generalized winding number has the following properties:
\begin{enumerate}
    \item \( \#(fg) = \#f + \#g \)
    \item  The generalized winding number is locally constant.
\end{enumerate}
\end{proposition}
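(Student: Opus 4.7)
\medskip
\noindent\textbf{Proof proposal.} For part (1), the plan is first to verify that the product $fg$ is itself pre-periodic. Given $\varepsilon>0$, I would pick $N_f,N_g\in\mathbb{Z}^+$ witnessing pre-periodicity of $f$ and $g$ for the tolerance $\varepsilon/2$, and then take $N:=N_f N_g$ (or any common multiple) as a witness for $fg$: the bound
\[
|f(x+kN)g(x+kN)-f(x)g(x)|\le|g(x+kN)-g(x)|+|f(x+kN)-f(x)|<\varepsilon
\]
follows from $|f|=|g|=1$. Once $fg$ is pre-periodic, Lemma~\ref{lift} produces a lift, and the key observation is that $\tilde f+\tilde g$ is already a valid lift of $fg$, since $p(\tilde f+\tilde g)=e^{2\pi i\tilde f}\,e^{2\pi i\tilde g}=fg$. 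Plugging $\tilde f+\tilde g$ into the definition of Theorem~\ref{gwd} and splitting the limit gives $\#(fg)=\#f+\#g$ immediately.

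For part (2), I plan to exploit Lemma~\ref{lift8} directly. Fix a pre-periodic function $f$ with chosen lift $\tilde f$, and suppose $g$ is any pre-periodic function with $\|g-f\|_\infty<\varepsilon$ for some $\varepsilon<1/8$. Lemma~\ref{lift8} furnishes a lift $\tilde g$ of $g$ satisfying $\|\tilde g-\tilde f\|_\infty<\varepsilon/4$, in particular a uniform bound independent of $x$. Consequently
\[
\left|\frac{\tilde g(x)-\tilde g(-x)}{2x}-\frac{\tilde f(x)-\tilde f(-x)}{2x}\right|
\le\frac{|\tilde g(x)-\tilde f(x)|+|\tilde g(-x)-\tilde f(-x)|}{2x}
<\frac{\varepsilon}{4x},
\]
which tends to $0$ as $x\to+\infty$. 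Taking limits shows $\#g=\#f$ on the entire open neighborhood $\{g:\|g-f\|_\infty<\varepsilon\}$ of $f$, proving local constancy.

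\medskip
\noindent\textbf{Anticipated difficulty.} Neither step involves a deep obstruction: additivity reduces to choosing a common period and using the homomorphism property of $p$, while local constancy is essentially a packaging of Lemma~\ref{lift8}. The one point that deserves care is the legitimacy of using $\tilde f+\tilde g$ in the generalized winding number formula, which depends on the observation at the end of Theorem~\ref{gwd} that the limit is independent of the chosen lift; once that is invoked, everything else is a direct computation.
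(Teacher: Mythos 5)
Your proposal is correct and follows essentially the same route as the paper: part (1) observes that $\tilde f+\tilde g$ is a lift of $fg$ and splits the limit, while part (2) packages Lemma~\ref{lift8} into the $O(1/x)$ estimate. The only addition you make is explicitly checking that $fg$ is pre-periodic (which the paper leaves implicit), a harmless and slightly more careful touch; the core argument is identical.
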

\begin{proof}
    1.\( \tilde{f} + \tilde{g} \) is a lift of \( f \cdot  g \). Then, we have
    $$
    \#(fg)=\lim_{x\to+\infty}\frac{\widetilde{fg}(x)-\widetilde{fg}(-x)}{2x}=\lim_{x\to+\infty}\frac{\tilde{f}(x)+\tilde{g}(x)-\tilde{f}(-x)-\tilde{g}(-x)}{2x}=\#f+\#g.
    $$
    2.  If \( \sup\limits_{x \in \mathbb{R}} |f(x) - g(x)| < \frac{1}{8} \), then by Lemma \ref{lift8} there exist lifts of \( f \) and \( g \) that satisfy the following conditions:
    $$
     \sup\limits_{x \in \mathbb{R}} |\tilde{f}(x) - \tilde{g}(x)| < \frac{1}{32} .
    $$
So $\lim_{x\to+\infty}\frac{|\tilde{f}(x)-\tilde{f}(-x)-\tilde{f}(x)+\tilde{g}(-x)|}{2x}<\lim_{x\to+\infty}\frac{1/16}{2x}=0.$
\end{proof}
The concept of generalized winding numbers will enable us to construct homotopy invariants for adelic periodic functions from local to global in the next section. 
\begin{remark}\label{rem}
In fact, for functions \( f \in C(\mathbb{R}, S^1) \) that share identical limits at both \( +\infty \) and \( -\infty \), an analogous analysis applies: such functions similarly admit a unique lift \( \tilde{f} \in C(\mathbb{R}, \mathbb{R}) \), allowing the definition of a winding number \( \#f \):
    \begin{equation}
        \#f=\lim_{x\to+\infty}\tilde{f}(x)-\tilde{f}(-x),
    \end{equation}
which inherits the properties articulated in Proposition 1.5. This concept is essential when studying the $K$-theory of \(C^*(\mathbb{A})\).

\end{remark}
\section{Periodic adelic function and pre-periodic function}
The goal of this section is to describe the structure of \( K_1(C^*(\mathbb{Q})) \) by identifying all homotopy equivalence classes of unitary elements in \( C_0(\mathbb{A}/\mathbb{Q}) \). We introduce a new definition: the generalized winding number, and prove that the existence of a homotopy between two elements is determined by whether this quantity is equal. The lifting of periodic adelic functions provides an effective approach to the construction of homotopy, and the following  disjoint partition of \( \mathbb{A}_f \) is the first step in this procedure. The properties of p-adic numbers can be found in the reference \cite{MR2807433}
\begin{lemma}\label{china}
    Let \( A_f \) denote \(\prod\limits_{p \text{ prime}} \mathbb{Q}_p
 \), which is the restricted product of all \( \mathbb{Q}_p \),  \( e_p(N) \) denote the exponent of the prime \( p \) in a positive integer N's prime factorization, i.e. 
\(
N = \prod\limits_{p \text{ prime}} p^{e_p(N)},
\)
and \(K_N:=\prod\limits_{p \text{ prime}}p^{e_p(N)}\mathbb{Z}_p\). 
Then there exists the following decomposition:

$$
\A_f=\bigsqcup_{\alpha \in \mathbb{Q}/N\mathbb{Z}} \alpha+K_N.
$$
\end{lemma}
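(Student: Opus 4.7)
The plan is to prove the decomposition in two steps: first that the cosets $\alpha+K_N$ indexed by distinct classes in $\Q/N\Z$ are pairwise disjoint, and second that together they exhaust $\A_f$. The key simplifying observation is that $K_N=N\hat{\Z}$ as subgroups of $\A_f$, where $\hat{\Z}=\prod_p\Z_p$, since locally $N\Z_p=p^{e_p(N)}\Z_p$ (unit factors at primes $p\nmid N$ are absorbed). With this identification, both steps reduce to classical strong approximation for $\Q$ combined with the standard isomorphism $\hat{\Z}/N\hat{\Z}\cong\Z/N\Z$.

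For disjointness I would compute $K_N\cap\Q=N\hat{\Z}\cap\Q$. Starting from the valuation-theoretic identity $\hat{\Z}\cap\Q=\Z$ and multiplying by $N$, one obtains $K_N\cap\Q=N\Z$. Hence $\alpha+K_N=\alpha'+K_N$ iff $\alpha\equiv\alpha'\pmod{N\Z}$, so the cosets indexed by distinct classes in $\Q/N\Z$ are pairwise disjoint.

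For exhaustion I would use a two-step adjustment. Given $x=(x_p)\in\A_f$, strong approximation $\A_f=\Q+\hat{\Z}$ yields $\alpha_0\in\Q$ and $y\in\hat{\Z}$ with $x=\alpha_0+y$. The isomorphism $\hat{\Z}/N\hat{\Z}\cong\prod_{p\mid N}\Z_p/p^{e_p(N)}\Z_p\cong\Z/N\Z$ (the last step being the Chinese Remainder Theorem) then supplies an integer $\beta\in\{0,\dots,N-1\}$ with $y-\beta\in N\hat{\Z}=K_N$. Setting $\alpha:=\alpha_0+\beta\in\Q$ gives $x-\alpha=y-\beta\in K_N$, so $x\in\alpha+K_N$ as required.

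The main obstacle, and the only step that requires genuine bookkeeping, is establishing strong approximation $\A_f=\Q+\hat{\Z}$ in a down-to-earth way. Concretely, for each of the finitely many primes $p$ where a given $(x_p)$ has $x_p\notin\Z_p$, one forms the truncated negative-power part of the $p$-adic expansion as a rational $\alpha_p$ whose denominator is a power of $p$, and sums to obtain $\alpha_0=\sum_p\alpha_p\in\Q$. The prime-by-prime verification that $x-\alpha_0\in\hat{\Z}$ exploits that $\alpha_p\in\Z_q$ whenever $q\neq p$, together with the defining property $x_p-\alpha_p\in\Z_p$. Once this is in place, the CRT adjustment and the disjointness computation are immediate, and no deeper subtlety is expected.
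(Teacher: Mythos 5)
Your proof is correct, and while it uses the same basic ingredients as the paper (rational truncations of $p$-adic expansions to handle the finitely many ``bad'' primes, plus the Chinese Remainder Theorem), you organize them differently and, in one place, more carefully. The paper builds the approximating rational $\alpha$ in a single pass: it finds, via CRT, integers $M_i$ with prescribed congruence conditions at each prime and takes $\alpha=\sum_i \alpha_i M_i$ so that $x_f-\alpha\in K_N$ directly. You instead observe that $K_N=N\hat{\Z}$ (since units at primes $p\nmid N$ are absorbed into $\Z_p$) and then factor the argument into two independent stages: first ``strong approximation'' $\A_f=\Q+\hat{\Z}$ using prime-power-denominator truncations, and then the purely finite CRT fact $\hat{\Z}/N\hat{\Z}\cong\Z/N\Z$ to shift by an integer $\beta$ into $N\hat{\Z}$. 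This cleanly separates the infinite-place content (clearing denominators) from the finite residue arithmetic, and is arguably easier to follow. On the disjointness side you also supply a detail the paper elides: the paper asserts that $\alpha-\alpha'\in K_N$ implies $\alpha\equiv\alpha'\pmod{N}$ without justification, whereas you explicitly derive $K_N\cap\Q=N\hat{\Z}\cap\Q=N(\hat{\Z}\cap\Q)=N\Z$, which is exactly the needed lemma. What the paper's one-pass construction buys in return is that it produces the representative $\alpha$ concretely without invoking the abstract quotient isomorphism; the two are of comparable length, so the choice is largely a matter of taste.
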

\begin{proof}
Our proof strategy is as follows: For any \( x \in \mathbb{A}_f \), we aim to find a rational number \( \alpha \) such that \( x - \alpha \in K_N \). Additionally, we must show that if two rational numbers \( \alpha_1 \) and \( \alpha_2 \) both satisfy this condition, then 
\[
\alpha_1 - \alpha_2 \equiv 0 \pmod{N},
\]
and consequently 
\[
\alpha_1 + K_N = \alpha_2 + K_N.
\]
This ensures the disjointness of the union in construction within the lemma.

First we choose an $x_f=\{x_2,x_3,x_5\dots\}\in \A_f$, and construct a finite set of prime numbers:
\[
S:=\{p \text{ is prime}| x_p\notin p^{e_p(N)}\mathbb{Z}_p\}.
\]
Because \(x_p\) belongs to \(\mathbb{Z}_p\) and \(e_p(N)=0\) holds for almost every prime number \(p\), $S$ is finite. Denote the elements in $S$ by \(p'_1, p'_{2},\dots,p'_{\lvert S\rvert}\). 

Applying Chinese Remainder Theorem, for any $p'_{i}\in S$, we can find an integer \(M_i\) such that:
\begin{align}
    1, M_i&\equiv 1\ (\textbf{mod} (p'_i)^{e_{p'_i}(N)}).\\
    2, M_i&\equiv 0\ (\textbf{mod}\ (p^{e_p(N)}),\ \forall p\neq p'_i.
\end{align}

On the other hand, take \(p'_i \in S\), there exists an $\alpha_i\in \mathbb{Q}$ satisfying
\[
\alpha_i+(p'_i)^{e_{p'_i}(N)}\mathbb{Z}_{p'_i}=x_{p'_i}+(p'_i)^{e_{p'_i}(N)}\mathbb{Z}_{p'_i},
\]
and the denominator of \(\alpha_i\) is a power of \(p'_i\). It follows that for any $p\neq p'_i$, \(\alpha_i\) is $p$-adic integer.

As a result, we obtain the rational number:
\[
\alpha:=\sum\limits_{i=1}^{\lvert S \rvert}\alpha_i\cdot M_i.
\]
The current task involves verifying \( x - \alpha \in K_N \). For a fixed $p'_j\in S$, we decompose \(\alpha\) into the sum of two parts:
\[\alpha_{j}\cdot M_{j}+\sum\limits_{i\neq j}\alpha_i\cdot M_i.\]
 \(M_{j}\equiv 1\ (\textbf{mod} (p'_j)^{e_{p'_j}(N)})\) implies that \(x_{p'_j}-\alpha_{j}\cdot M_{j}\in(p'_j)^{e_{p'_j}(N)}\mathbb{Z}_{p'_j}\). Since the subsequent summation terms belong to \( (p'_j)^{e_{p'_j}(N)}\mathbb{Z}_{p'_j} \), we have
\[
x_{p'_j} - \alpha \in (p'_j)^{e_{p'_j}(N)} \mathbb{Z}_{p'_j}.
\]
 For $p\notin S$, $\alpha_i$ is a $p-$adic integer for all $i$, while each \( M_i \) is divisible by \( p^{e_p(N)} \), so both $\alpha$ and $x_p-\alpha$ belong to $p^{e_p(N)}\mathbb{Z}_p$.

Given $x_f\in \A_f$, we have found $\alpha\in \mathbb{Q}$ such that $x_f\in \alpha+K_N$. Let $\alpha'\in \mathbb{Q}$ satisfy the same conditions. Then this leads to
$$
x_f-\alpha\in K_N,\
x_f-\alpha'\in K_N\Rightarrow
\alpha-\alpha'\in K_N.
$$
We obtain that $\alpha-\alpha'\equiv0\ \text{mod}\ N$, then $\alpha+K_N=\alpha'+K_N.$
\end{proof}
By utilizing  the property of adelic periodic functions that they can have any rational number as a period, we can extend the concept of generalized winding numbers to adelic periodic functions. 
\begin{theorem}\label{gonggongwind}
    Let \( g \in C(\mathbb{A}/\mathbb{Q}, S^1) \), \( x_f \in \mathbb{A}_f \), then the associated element in \( C(\mathbb{R}, S^1) \) given by  
\[
g_{x_f}(x_\infty) := g(\{x_\infty, x_f\})
\]  
is a pre-periodic function. Futhermore, for any \( x_f \in \mathbb{A}_f\), we have \(\#g_{x_f}=\#g_{0_f}\), where \(0_f\) is the embedding of 0 in \(\A_f\). Therefore, we can define the generalized winding number of \( g \in C(\mathbb{A}/\mathbb{Q}, S^1) \) by
$$
\#g:=\#g_{0_f}.
$$
\end{theorem}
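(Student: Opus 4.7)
The plan is to reduce both claims to the uniform continuity of $g$ on the compact quotient $\A/\Q$, combined with the $\Q$-invariance of $g$. Together these allow us to trade a real translation of size $N$ for a finite-adelic translation that can be made arbitrarily small in $\A_f$ by choosing $N$ divisible by sufficiently high powers of the relevant primes.

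For pre-periodicity of $g_{x_f}$, given $\varepsilon>0$ I would invoke uniform continuity to select a product neighborhood $V=V_\infty\times V_f$ of $0\in\A$ such that $|g(y+v)-g(y)|<\varepsilon$ for every $y\in\A$ and $v\in V$. Since the sets $K_N=\prod_p p^{e_p(N)}\Z_p$ form a neighborhood base at $0\in\A_f$, I can shrink $V_f$ to ensure $K_N\subset V_f$ for some $N\in\Z^+$. Writing $kN_f$ for the image of $kN\in\Z$ in $\A_f$, the diagonal embedding $\{kN,kN_f\}\in\Q$ gives, by $\Q$-invariance,
\begin{equation*}
g_{x_f}(x_\infty+kN)=g(\{x_\infty+kN,x_f\})=g(\{x_\infty,x_f\}+\{0,-kN_f\}).
\end{equation*}
Since $e_p(kN)\ge e_p(N)$ for every prime $p$, we have $kN_f\in K_N\subset V_f$, whence $\{0,-kN_f\}\in V$ and $|g_{x_f}(x_\infty+kN)-g_{x_f}(x_\infty)|<\varepsilon$ uniformly in $k$ and $x_\infty$, which is exactly the pre-periodicity condition.

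For the equality $\#g_{x_f}=\#g_{0_f}$, the strategy is to bridge the two via a rational approximation. Applying Lemma~\ref{china} with an integer $N$ chosen so that $K_N$ lies in the finite-adelic part of a $1/8$-uniform-continuity neighborhood for $g$, I can find $\alpha\in\Q$ with $x_f-\alpha_f\in K_N$, and consequently $\sup_{x_\infty}|g_{x_f}(x_\infty)-g_{\alpha_f}(x_\infty)|<1/8$. By the local constancy of the generalized winding number (Proposition~\ref{prop}), this forces $\#g_{x_f}=\#g_{\alpha_f}$. A final use of $\Q$-invariance, subtracting the diagonal embedding of $\alpha$, gives $g_{\alpha_f}(x_\infty)=g(\{x_\infty-\alpha,0_f\})=g_{0_f}(x_\infty-\alpha)$, so $g_{\alpha_f}$ is a real translate of $g_{0_f}$; the symmetric-limit definition of the generalized winding number is manifestly invariant under real translations, yielding $\#g_{\alpha_f}=\#g_{0_f}$. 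The main technical point is the first step, where one must produce a single $V_f$ that simultaneously witnesses $\varepsilon$-uniform continuity of $g$ on $\A/\Q$ and contains $K_N$ for an explicit $N$; once this combinatorial alignment between the neighborhood base of $\A_f$ and the modulus of continuity of $g$ is arranged, the remainder is a clean consequence of $\Q$-invariance and the density of $\Q$ in $\A_f$.
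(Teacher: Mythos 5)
Your proof is correct and takes essentially the same approach as the paper: both hinge on uniform continuity of $g$ on the compact quotient $\A/\Q$, the choice of $N$ so that $kN$ lands in the small finite-adelic neighborhood $K_N$, the $\Q$-invariance trick converting a real translation into a finite-adelic one, and finally Lemma~\ref{china} plus local constancy (Proposition~\ref{prop}) and translation invariance of the generalized winding number for $\#g_{x_f}=\#g_{0_f}$. The only cosmetic difference is that you argue directly with the neighborhood base $\{K_N\}$ of $0\in\A_f$ whereas the paper first constructs an explicit metric $|\cdot|_{\A}$; your phrase ``shrink $V_f$ to ensure $K_N\subset V_f$'' should read ``choose $N$ large enough that $K_N\subset V_f$,'' but the intent is clear and the argument sound.
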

\begin{proof}
We define a metric on \(\A_f\) by
\[
|x_f|_{\A_f} = 
\begin{cases} 
\max \left( \frac{|x_p|_p}{p} \right), & \forall p, x_p \in \mathbb{Z}_p \\
\max |x_p|, & \text{otherwise}.
\end{cases}
\]
Thus we can define a metric on $\mathbb{A}$ by
$$
|x|_{\A}=|\{x_{\infty,},x_f\}|_{\A}=|x_{\infty}|_{\infty}+|x_f|_{\A_f}.
$$
For the equivalence between the topology induced by this metric and the restricted product topology, see Appendix \ref{metric}.

 Since \( \mathbb{A}/\mathbb{Q} \) is a compact \( C_1 \)-metric space, and any continuous functions $g$ on it are uniformly continuous.

For \( \varepsilon > 0 \), there exists \( \delta > 0 \) such that

\[
\forall x, y \in \mathbb{A}, \quad |x - y|_\mathbb{A} < \delta \quad \Rightarrow \quad |g(x) - g(y)| < \varepsilon.
\]

Let \( N \) be a positive integer such that for any prime $p$, \(
 \quad \frac{|N|_p}{p} < \delta.
\)
Then for any \( x = \{ x_\infty, x_2, x_3, x_5, x_7,\dots \} = \{ x_\infty, x_f \} \in \A,\ 
k \in \mathbb{Z}\), we have \( |x - (x_\infty, x_f + kN)| < \delta.
\)
Therefore,
\begin{align*}
    \varepsilon &> |g(x) - g(\{ x_\infty, x_f + kN \})| \\&= |g(x) - g(\{ x_\infty - kN, x_f \})| \\&= |g_{x_f}(x_\infty) - g_{x_f}(x_\infty - kN)|,\forall k\in \mathbb{Z}, x_{\infty}\in \mathbb{R}.
\end{align*}
This shows that $g_{x_f}$ is a pre-periodic function. 

With Lemma \ref{prop}, we can find $N\in \mathbb{N}$ such that for any $x_f\in K_N,\ \#g_{x_f}=\#g_{0_f}$. Then for any \(x_f\in \A_f \), there exists an $\alpha \in \mathbb{Q}/N\mathbb{Z}$ satisfing \( x_f-\alpha \in K_N\) by Lemma \ref{china}, which follows that
$$
\#g_{0_f}(x_{\infty})=\#g_{x_f-\alpha}(x_{\infty})=\#g_{x_f}(x_{\infty}+\alpha)=\#g_{x_f}.\qedhere
$$
\end{proof}
We proceed to provide an example of computing the generalized winding number for adelic periodic functions.
Define an additive adelic character $e:\A\rightarrow \mathbb{C}$. For \(x=\{x_\infty,x_2,x_3,\dots\}\in \A\), let
\[
e(x)=\prod\limits_{v\leq\infty}e_v(x_v)
\]
with \(e_p(x_p)=e^{-2\pi i\{x_p\}}\) if \(p\leq \infty\) and \(e_{\infty}(x_{\infty})=e^{2\pi ix_{\infty}}\) if \(v=\infty\). Then $e_{0_f}(x_\infty)=e^{-2\pi i\cdot x_{\infty}}$, $\#e=\#e_{0_f}=\lim_{x\rightarrow \infty}\frac{- x-(x)}{2x}=-1.$

To establish relationships between functions with the same generalized winding number using simple linear homotopy, it is necessary to lift functions from \( C(\A/\mathbb{Q}, S^1) \) to \( C(\A/\mathbb{Q},\mathbb{R}) \).
\begin{lemma}\label{adelelift}
     For \( g \in C(\A/\mathbb{Q}, S^1) \) with \( \#g = 0 \), there exists \( \tilde{g} \in C(\A/\mathbb{Q}, \mathbb{R}) \) such that \( g = p \circ \tilde{g} \), where \(p(x)=e^{2\pi ix}\in C(\mathbb{R}).\)
\end{lemma}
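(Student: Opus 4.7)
The plan is to build the lift $\tilde{g}$ in three stages — on the real-axis slice $\R\times\{0_f\}$, then on the clopen neighborhood $\R\times K_N$, then globally by rational translation — with the hypothesis $\#g=0$ entering decisively only at the last stage, to force well-definedness.

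First, by Theorem \ref{gonggongwind} the slice $g_{0_f}(x_\infty):=g(\{x_\infty,0_f\})$ is pre-periodic with $\#g_{0_f}=\#g=0$. Lemma \ref{lift} produces a continuous lift $\tilde{g}_{0_f}:\R\to\R$, and the argument of Theorem \ref{gwd} guarantees that, for any $\eta>0$, one can find $N\in\Z^+$ with $|\tilde{g}_{0_f}(x+kN)-\tilde{g}_{0_f}(x)|<\eta$ for every $k\in\Z$ and $x\in\R$. I fix $\eta<1/4$ and, enlarging $N$ to a suitable multiple (which preserves the pre-periodicity bound), arrange simultaneously by uniform continuity of $g$ on $\A/\Q$ that $\|g_{x_f}-g_{0_f}\|_\infty<1/8$ whenever $x_f\in K_N$. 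Lemma \ref{lift8} then supplies a unique lift $\tilde{g}_{x_f}$ of $g_{x_f}$ with $\|\tilde{g}_{x_f}-\tilde{g}_{0_f}\|_\infty<1/32$, depending jointly continuously on $(x_\infty,x_f)\in\R\times K_N$.

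Next, use the clopen partition $\A_f=\bigsqcup_{\alpha\in\Q/N\Z}(\alpha+K_N)$ from Lemma \ref{china} to extend globally. For $x_f\in\alpha+K_N$ with $\alpha\in\Q$ a representative, set
\[
\tilde{g}(\{x_\infty,x_f\}):=\tilde{g}_{x_f-\alpha}(x_\infty-\alpha).
\]
The $\Q$-periodicity of $g$ on $\A$ yields $g(\{x_\infty,x_f\})=g(\{x_\infty-\alpha,x_f-\alpha\})$, so $p\circ\tilde{g}=g$; the clopen nature of the pieces delivers continuity on $\A$; and a direct check using the $\Q/N\Z$ indexing shows the formula is itself $\Q$-invariant, so $\tilde{g}$ descends to an element of $C(\A/\Q,\R)$.

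The main obstacle is well-definedness in this last step. Two representatives $\alpha,\alpha'\in\Q$ of the same class in $\Q/N\Z$ differ by a multiple $nN$, and the two candidate values of $\tilde{g}$, being lifts of the same point of $S^1$, differ by an integer. Noting that both $x_f-\alpha$ and $x_f-\alpha-nN$ lie in $K_N$ (since $nN\in K_N$), I combine the bounds $\|\tilde{g}_{x_f-\alpha}-\tilde{g}_{0_f}\|_\infty<1/32$ and $\|\tilde{g}_{x_f-\alpha-nN}-\tilde{g}_{0_f}\|_\infty<1/32$ with the pre-periodicity bound $|\tilde{g}_{0_f}(y+nN)-\tilde{g}_{0_f}(y)|<\eta<1/4$ at $y=x_\infty-\alpha$, giving total discrepancy at most $\eta+2/32<1/2$. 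An integer of magnitude less than $1/2$ must vanish, so the two candidates agree. It is precisely here that the hypothesis $\#g=0$ is used, via the almost-periodicity it forces on $\tilde{g}_{0_f}$.
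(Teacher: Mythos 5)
Your proof is correct and follows essentially the same approach as the paper: lift on $\R\times K_N$ via Lemmas \ref{lift} and \ref{lift8}, extend across the coset partition of Lemma \ref{china} by rational translation, and use $\#g=0$ to control the integer ambiguity through the bound on $|\widetilde{g_{0_f}}(x+kN)-\widetilde{g_{0_f}}(x)|$. The one presentational difference is that you fold the paper's two-stage verification of $\Q$-periodicity (the $\alpha=\alpha_1+\alpha_2$ splitting with $\alpha_1\in N\Z$, $\alpha_2\in\Q/N\Z$) into a single well-definedness check over changes of coset representative, after which $\Q$-invariance of your formula $\tilde g(\{x_\infty,x_f\})=\widetilde{g_{x_f-\alpha}}(x_\infty-\alpha)$ is automatic.
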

\begin{proof}
    Let \( g_{0_f}(x_\infty) = g(\{x_{\infty}, 0_f\}) \) , which is a pre-periodic function on \( \mathbb{R} \). Following Lemma \ref{lift}, there exists \( \widetilde{g_{0_f}} \in C(\mathbb{R}, \mathbb{R}) \) such that \( p(\widetilde{g_{0_f}}) = g_{0_f} \).

Due to uniform continuity of $g$, we can find a $\delta > 0$ such that 
\[
|x - y|_{\A} < \delta \quad \Rightarrow \quad |g(x) - g(y)| < \frac{1}{8}.
\]
 Next, we select a suitable positive integer \( N \) such that the diameter of the set $K_N$ is less than $\delta$, which implies that
\[
\lvert g_{x_f}(x_\infty) - g_{0_f}(x_\infty)\rvert= \lvert g(\{x_{\infty}, x_f\}) - g(\{x_{\infty}, 0_f\})\rvert < \frac{1}{8},\ \forall x_f\in K_N.
\]

According to Lemma \ref{lift8}, there exists a unique lift \(\widetilde{g_{x_f}}\) of \( g_{x_f} \), satisfying:
\[
|\widetilde{g_{x_f}}(x_\infty) - \widetilde{g_{0_f}}(x_\infty)| < \frac{1}{32}.
\]

As stated in Lemma \ref{china},
\[
\A_f = \bigsqcup_{\alpha\in \mathbb{Q}/N\mathbb{Z}} \left( \alpha + K_N\right),
\]
then it follows that for any \( x_f \in \A_f \), there exists \( \alpha \in \mathbb{Q}/N\mathbb{Z} \) such that \( x_f + \alpha \in K_N \). 

For \( g \in C(\mathbb{A}/\mathbb{Q},S^1) \), we have:
\[
g_{x_f}(x_\infty) = g_{x_f + \alpha}(x_\infty+\alpha).
\]

Thus, by Lemma \ref{lift}, the function \( g_{x_f} \) has a unique lift at \( 0_{\infty} \) with value \( \widetilde{g_{x_f + \alpha}}(0_\infty+\alpha) \), which is denoted as \( \widetilde{g_{x_f}} \).

Define the map \( \widetilde{g}: \{x_{\infty}, x_f\} \mapsto \widetilde{g_{x_f}}(x_\infty) \) from \( \A \) to \( \mathbb{R} \). It is evident that $g=e^{2\pi i \tilde{g}}.$ Now we need to show that $\tilde{g}$ is continuous and $\mathbb{Q}$-periodic. To prove $\tilde{g}$ is continuous, we need to consider its continuity on $\mathbb{R}$ and $\A_f$. Since each $\widetilde{g_{x_f}}$ is the lift of a pre-periodic function (and hence a uniformly continuous function), $\tilde{g}$ is uniformly continuous with respect to the variable $x_\infty$. By the proof of Theorem \ref{gonggongwind}, $g$ is uniformly continuous, then for any $0<\varepsilon<\frac{1}{8}$, we can choose a $\delta >0$ such that:
\begin{align}
    1&, \lvert x_f-y_f\rvert_{\A_f}<\delta \Rightarrow x_f-y_f\in K_N, \forall x_f,\ y_f\in \mathbb{A}_f.\\
    2&,\lvert x_f-y_f\rvert_{\A_f}<\delta \Rightarrow \sup\lvert g_{x_f}-g_{y_f}\rvert<\varepsilon.
\end{align}
For the $x_f,y_f$ above, we have
\[
\lvert \widetilde{g_{x_f}}(x_\infty)-\widetilde{g_{y_f}}(x_\infty)\rvert<\frac{\varepsilon}{4},
\]
according to Lemma \ref{lift8}. Therefore, $\tilde{g}$ is uniformly continuous for both $x_\infty$ and $x_f$.

Finally, we prove that \( \widetilde{g} \in C(\A / \mathbb{Q}, \mathbb{R}) \), i.e. for any \( x_f \in \A_f, \alpha\in\mathbb{Q} \), it holds that:
\[
\widetilde{g}(\{x_\infty, x_f\}) = \widetilde{g}(\{x_\infty + \alpha, x_f + \alpha\}).
\]

With the positive integer $N$ given above, we divide the rational number $\alpha$ into the sum of $\alpha_1$ and $\alpha_2$, where \( \alpha_1 \in N\mathbb{Z} \) and \( \alpha_2 \in \mathbb{Q} /N\mathbb{Z} \). Let us further assume that $x_f\in \beta +K_N$ for some $\beta\in \mathbb{Q}/N\mathbb{Z}$.

Part 1: For the integer $N$ given above, we have
\[
|g_{x_f}(x_\infty) - g_{x_f}(x_\infty + kN)| < \frac{1}{8}, \ \forall x_\infty \in \mathbb{R}, k \in \mathbb{Z}.
\]
In view of Theorem \ref{gwd}, it is possible to find an integer $n_1$ such that:
\[
|\widetilde{g_{x_f}}(x_\infty) - \widetilde{g_{x_f}}(x_\infty + kN)-k\cdot n_1| \leq \frac{1}{32}.
\]
However, the generalized winding number of \(g_{x_f}\) is zero, which equals to $\frac{n_1}{N}.$ It follows that 
\[
|\widetilde{g_{x_f}}(x_\infty) - \widetilde{g_{x_f}}(x_\infty + kN)| \leq \frac{1}{32}.
\]
Additionally, 
\[
g_{x_f + kN}(x_\infty) = g(\{x_\infty, x_f + kN\}) = g(\{ x_\infty - kN, x_f \})=g_{x_f}(x_{\infty}-kN).
\]
So \( \widetilde{g_{x_f+kN}}(x_\infty ) \) and \( \widetilde{g_{x_f}}(x_\infty-kN) \)  differ by an integer.

Note that \(x_f\) and\( x_f+kN\) both belong to $\beta +K_N$, so \[|\widetilde{g_{x_{f}}}(x_{\infty})-\widetilde{g_{x_{f}+kN}}(x_{\infty})|<\frac{1}{32},\]while 
\[
|\widetilde{g_{x_{f}}}(x_{\infty})-\widetilde{g_{x_{f}}}(x_{\infty}+kN)|<\frac{1}{32}.
\]
Hence, for every \(\ k\in \mathbb{Z},x_{\infty}\in \mathbb{R},x_{f}\in \beta+K_N\)
\begin{align*}
    |\widetilde{g_{x_{f}+kN}}(x_{\infty})&-\widetilde{{g}_{x_{f}}}(x_{\infty}-kN)|<\frac{1}{16}.
\end{align*}
Thus,
\begin{align*}
    \widetilde{g_{x_{f}+kN}}&(x_{\infty})=\widetilde{{g}_{x_{f}}}(x_{\infty}-kN).
\end{align*}
We can replace \( kN \) with \( \alpha_1 \) to complete the first part of the proof.

Part 2: From the definition of \( \tilde{g} \),  for  \( x_f \in\beta+ K_N \), we have\[\widetilde{g_{x_f}}(0_{\infty})=\widetilde{g_{x_f-\beta}}(-\beta).\]
However, \(\widetilde{g_{x_f}}(x_{\infty})-\widetilde{g_{x_f-\beta}}(x_{\infty}-\beta)\) is a lift of the function
\[g_{x_f}(x_{\infty})\cdot g_{x_f-\beta}(x_{\infty}-\beta)^{-1},\]
which is equal to 1 for all $x_{\infty}\in \mathbb{R}$. Thus,
\[\tilde{g}(\{x_\infty, x_f\})=\widetilde{g_{x_f}}(x_{\infty})=\widetilde{g_{x_f-\beta}}(x_{\infty}-\beta)=\tilde{g}(\{x_\infty-\beta, x_f-\beta\}).\]
Similarly, for $x_f-\beta\in K_N, \beta+\alpha_2\in \mathbb{Q}$, it can be proved that:
\[
\widetilde{g_{x_f-\beta}}(x_{\infty}-\beta)=\widetilde{g_{x_f+\alpha_2}}(x_{\infty}+\alpha_2).
\]
We arrive at the final conclusion:
\[
\tilde{g}(\{x_\infty, x_f\})=\tilde{g}(\{x_\infty+\alpha, x_f+\alpha\}), \forall \{x_\infty,x_f\}\in\A, \alpha\in \mathbb{Q}.
\]
\end{proof}

    Now let \( f, g \in C(\A/\mathbb{Q}, S^1) \), and suppose that \( \#f = \#g = \alpha \in \mathbb{Q} \). Note that the generalized winding numbers of \( e(\alpha x)f \) and \( e(\alpha x)g  \) are both 0. By Lemma \ref{adelelift}, their lifts exist, denoted by 
\[
\widetilde{e(\alpha x)f},\widetilde{e(\alpha x)g} \in C(\A/\mathbb{Q}, \mathbb{R}).
\]
Construct the following homotopy \( H \):
\begin{align*}
H: \mathbb{R} \times \A/\mathbb{Q} &\to S^1 \\
(t, \{x_\infty, x_f\}) &\mapsto exp(2\pi i \left( t \cdot \widetilde{e(\alpha x)f} + (1-t) \cdot \widetilde{e(\alpha x)g} \right)) \cdot e(\alpha x).
\end{align*}
Thus, \(H\) is continuous, \(H(0,x)=g(x), H(1,x)=f(x)\) and for any $t\in [0,1]$, $H(t,x)\in C(\A/\mathbb{Q}, S^1)$. 

On the other hand, if \( f \) and \( g \) are homotopic, \(f\) can be continuously transformed into \( g \), so 
\[ f \text{ and }g \text{ are homotopic}\Leftrightarrow\#f=\#g .\]

As a conclusion of this section, we obtain the following theorem:
\begin{theorem}\label{K1Q}
    Let $\A$ be the adele ring, $\mathcal{U}(\text{C}(\A/\mathbb{Q}))$ be the unitary element in $\text{C}(\A/\mathbb{Q})$, and  $\mathcal{U}_0(\text{C}(\A/\mathbb{Q}))$ be the unitary elements in $\text{C}(\A/\mathbb{Q})$ whose generalized winding numbers are zero. Then we have: 
    \[
    K_1(C^*(\mathbb{Q}))=K_1(C(\A/\mathbb{Q}))=\mathcal{U}(\text{C}(\A/\mathbb{Q}))/\mathcal{U}_0(\text{C}(\A/\mathbb{Q}))\cong\mathbb{Q}.
    \]
    \end{theorem}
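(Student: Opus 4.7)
My plan is to chain together several results established earlier in the paper.

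The first reduction is immediate: the Pontryagin--Gelfand computations of Section~2 give $C^*(\Q)\cong C(\A/\Q)$, so it suffices to analyze $K_1(C(\A/\Q))$.

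The substantive content of the theorem is the candidate map
\[
\Phi:\ \mathcal{U}(C(\A/\Q))/\mathcal{U}_0(C(\A/\Q))\longrightarrow \Q,\qquad [u]\longmapsto \#u,
\]
whose well-definedness and homomorphism property are supplied by Proposition~\ref{prop}: additivity of $\#$ under multiplication gives the group homomorphism, while local constancy forces $\#u=0$ on the connected component of the identity, so $\Phi$ factors through $\mathcal{U}_0$. For injectivity, I would invoke the explicit homotopy construction preceding the theorem: given $f,g\in \mathcal{U}(C(\A/\Q))$ with $\#f=\#g=\alpha$, multiplication by $\overline{e(\alpha\,\cdot)}$ reduces each to winding number zero, Lemma~\ref{adelelift} supplies real-valued lifts in $C(\A/\Q,\R)$, and the straight-line homotopy between these lifts, reinserted into the exponential and multiplied by $e(\alpha\,\cdot)$, connects $f$ and $g$ inside $\mathcal{U}(C(\A/\Q))$. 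For surjectivity, the example calculation $\#e=-1$ gives $\#e(-\alpha\,\cdot)=\alpha$ for every $\alpha\in\Q$.

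The step I expect to be the main obstacle is the first equality $K_1(C(\A/\Q))=\mathcal{U}(C(\A/\Q))/\mathcal{U}_0(C(\A/\Q))$: by definition $K_1$ is formed from matrix unitaries of arbitrary size, and one must rule out genuinely matricial classes that do not collapse to ordinary unitaries. I would handle this via the inductive-limit presentation $C(\A/\Q)\cong \varinjlim_N C(\R/N\Z)$ from Section~3. At each finite stage, $C(\R/N\Z)\cong C(\T)$ satisfies $K_1=\mathcal{U}/\mathcal{U}_0=\Z$ already at the $1\times 1$ level, a classical consequence of the determinant construction on $\T$. The connecting maps $\phi_{N,M}$ send $1\times 1$ unitaries to $1\times 1$ unitaries and respect homotopy, so density of $\varinjlim \mathcal{U}(C(\R/N\Z))$ in $\mathcal{U}(C(\A/\Q))$ combined with continuity of $K_1$ under direct limits transfers the identification $K_1=\mathcal{U}/\mathcal{U}_0$ to the colimit. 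Composing with $\Phi$ yields the full chain $K_1(C^*(\Q))\cong \mathcal{U}(C(\A/\Q))/\mathcal{U}_0(C(\A/\Q))\cong \Q$.
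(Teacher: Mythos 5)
Your core argument coincides with the paper's: the generalized winding number is a well-defined homomorphism by Proposition~\ref{prop}, the explicit straight-line homotopy $H$ built from the lifts of Lemma~\ref{adelelift} (after multiplying by $e(\alpha x)$) shows that equal winding number implies homotopy, local constancy gives the converse, and the example $\#e=-1$ gives surjectivity onto $\Q$. Where you depart from the paper is in flagging, and then filling, the equality $K_1(C(\A/\Q))=\mathcal{U}(C(\A/\Q))/\mathcal{U}_0(C(\A/\Q))$: the paper asserts this as part of the theorem statement, but Sections~4--5 only classify homotopy classes of $1\times1$ unitaries, not of unitaries in $M_n(C(\A/\Q))$, so a priori the natural map $\pi_0(\mathcal{U}(A))\to K_1(A)$ could fail to be surjective or injective. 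Your fix via the inductive-limit presentation of Section~\ref{sec3} works: each $C(\R/N\Z)\cong C(\T)$ satisfies $K_1=\pi_0(\mathcal{U})=\Z$ classically; for a unital inductive limit $A=\varinjlim A_n$ one has $\pi_0(\mathcal{U}(A))\cong\varinjlim\pi_0(\mathcal{U}(A_n))$ (every unitary in $A$ is close, hence homotopic, to a polar-corrected near-unitary from some finite stage); and continuity of $K_1$ gives $K_1(A)\cong\varinjlim K_1(A_n)$, so the identification passes to the colimit. An alternative is a dimension argument: $\A/\Q$ is a compact space of covering dimension $1$, and for such $X$ the stabilization map $[X,U(1)]\to[X,U(\infty)]$ is already a bijection. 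Either way, your instinct that this reduction needs to be made explicit is correct; the paper as written leaves it implicit, relying on the reader to combine Section~\ref{sec3} with the winding-number computation.
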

    \begin{remark}\label{K0Q}
        Relatively, to compute the \( K_0 \)-group of \( C^*(\Q) \), it is necessary to consider the projections in \( C_0(\A/\Q) \), which are uniformly continuous \( \Q \)-periodic functions taking values in \( \{0, 1\} \).
        If $f(0)=1$, then for the continuity of $f$ we can choose a proper integer $N$ such that $f_{|\mathbb{R}\times K_N}\equiv 1$. Finally, for any $x=\{x_\infty,x_2,x_3,\dots\}\in \A$,  there exists $q\in \Q$ satisfying
        \[
        x-q=\{x_\infty-q,x_2-q,x_3-q,\dots\}\in \mathbb{R}\times K_N
        \]
by Lemma \ref{china}. Hence, the $\Q$-periodicity implies that $f\equiv 1$. We have obtained that there is no non-trivial projection in \( C_0(\A/\Q) \), so 
\[
K_0(C^*(\Q))=K_0(C_0(\A/\Q))=\mathbb{Z}.
\]
    \end{remark}

\section{$K_1$ group of $C^\ast(\mathbb{A})$}
We first consider the unitization of \( C_0(\mathbb{A}) \), denoted by \( C_0(\mathbb{A})^+\), endowed with the following algebraic operations:
\begin{align*}
    (f , \lambda) + (g , \mu) &= (f+g , \lambda+\mu), \\
    (f , \lambda)(g , \mu) &= (fg + \mu f + \lambda g, \lambda\mu),\\
    (f , \lambda)^*&=(f^* , \bar{\lambda}),
\end{align*}
where \(f,g \in C_0(\A), \lambda,\mu\in \mathbb{C}.\)

For a unitary element \((f, \lambda)\) in \(\CA\), it satisfies the characteristic property that
\( f + \lambda \) constitutes a continuous map from \(\A\) to the unit circle \(S^1\), 
with the limit at infinity being \(\lambda\):
\[
\lim_{\rvert x\lvert_{\A} \to \infty} (f(x) + \lambda) = \lambda \in S^1.
\]

We define a function in $C(\mathbb{R},S^1)$ as follow:
\[
f^\lambda_{x_f}(x_\infty):=f(\{x_\infty,x_f\}) + \lambda.
\]
The mapping \( f + \lambda \) is uniformly continuous on \( \A \). By selecting an appropriate positive integer \( N \), we ensure that for any \( x_f, y_f \in \A_f \) that satisfies \( x_f, y_f \in \alpha + K_N \) with some rational number \( \alpha \), the following holds:
\[
\sup_{x\in \mathbb{R}}\lvert f^\lambda_{x_f}-f^\lambda_{y_f}\rvert<\frac{1}{8}.
\]

By Remark \ref{rem}, there exists a lift \(\widetilde{f^\lambda_{x_f}}\) of \(f^\lambda_{x_f}(x_\infty)\). With Lemma \ref{china}, \(\A_f = \bigsqcup_{\alpha\in \mathbb{Q}/N\mathbb{Z}} \left( \alpha + K_N\right)\), 
we can construct lifts of \( f^\lambda_{x_f} \) on each subset \(\alpha+K_N\), analogous to the procedure developed in \sectwo, which yields a continuous mapping from \(\A\) to \(\mathbb{R}\):
\[
\widetilde{f+\lambda}:\ \{x_\infty,x_f\}\mapsto\widetilde{f^\lambda_{x_f}(x_\infty)},
\]
and
\[
exp(2\pi i\widetilde{f+\lambda})=f+\lambda.
\]
For any \((f+\lambda)\in C(\mathbb{R},S^1)\), we can define the following map:
\begin{align}
    \#(f+\lambda):&\A_f\mapsto \mathbb{Z}\\
                  &x_f\mapsto\#f^\lambda_{x_f},
\end{align}
it can be verified that this constitutes a homotopy invariant.

If $\#(f+\lambda)=\#(g+\mu)$, we can construct the following homotopy:

\begin{align*}
    H:[0,1]\times\A\mapsto&\A\\
         (t,\{x_\infty,x_f\})\mapsto& exp(2\pi i[(1-t)\cdot\widetilde{f+\lambda}(\{x_\infty,x_f\})\\
         &+t\cdot\widetilde{g+\mu}(\{x_\infty,x_f\}]).
\end{align*}

Therefore, we have the following theorem:
\begin{theorem}\label{K1A}
    Denote the unitary element in \(C_0(\A)^+\) by \(\mathcal{U}(C_0(\A)^+)\), and the unitary element homotopic to the constant function by \(\mathcal{U}_0(\CA)\), then \[K_1(C_0(\A)^+)=
    \mathcal{U}(C_0(\A)^+)/\mathcal{U}_0(\CA)=C_0(\A_f,\mathbb{Z}).
    \]
Moreover, we have the exact sequence expressed as:
\[ 0\rightarrow K_1(C_0(\A))\rightarrow K_1(\CA^+)\rightarrow K_1(\mathbb{C})\rightarrow 0, \] and
we arrive at the result:
\[
K_1(C^*(\A))=K_1(\CA)=C_0(\A_f,\mathbb{Z}).
\]
\begin{remark}\label{K0A}
To consider the projection $g$ in $\CA^+$, we first claim that $g_{x_f}(x_\infty):=g(\{x_\infty,x_f\})$ is uniformly  continuous function in $C(\mathbb{R})$ for any $x_f\in \A_f$, and $g_{x_f}$ has a common limit of 0 or 1 as $x_\infty$ tends to infinity. Therefore, $ g $ is identically equal to the constant value 0 or 1, which implies that
\[
K_0(\CA^+)=K_0(\mathbb{C})=\mathbb{Z}.
\]
We have $K_0(C_0(\A))=0$.    
\end{remark}
\end{theorem}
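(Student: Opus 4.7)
The plan is to exhibit an explicit isomorphism
$$\Phi: K_1(\CA^+) \;\longrightarrow\; C_0(\A_f,\Z), \qquad [(f,\lambda)] \;\longmapsto\; \#(f+\lambda),$$
already foreshadowed by the construction of $\#(f+\lambda): \A_f \to \Z$ just before the theorem, and then to recover $K_1(C^*(\A))$ from the split exact sequence for the unitization.

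First I would verify that $\Phi$ is well-defined, i.e.\ that $\#(f+\lambda)$ genuinely lies in $C_0(\A_f,\Z)$. Integrality is immediate from Remark \ref{rem}: for each fixed $x_f$ one has $f(\{x_\infty, x_f\}) \to 0$ as $x_\infty \to \pm\infty$, so $f^\lambda_{x_f}$ has common limit $\lambda$ at both ends of $\R$. Local constancy in $x_f$ is exactly the decomposition argument used in Lemma \ref{adelelift}: with $N$ chosen so that $\sup_{x_\infty}|f^\lambda_{x_f} - f^\lambda_{y_f}| < 1/8$ whenever $x_f - y_f \in K_N$, the lifts differ by at most $1/32$ and the winding numbers coincide. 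For vanishing at infinity, take a compact $K \subset \A$ with $|f| < 1/8$ off $K$; then the projection of $K$ onto $\A_f$ is compact, and for $x_f$ outside it one has $|f^\lambda_{x_f} - \lambda| < 1/8$ uniformly in $x_\infty$, forcing $\#f^\lambda_{x_f} = 0$.

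Next I would check that $\Phi$ is a homotopy-invariant homomorphism. Additivity comes from $(f,\lambda)(g,\mu)$ corresponding to the pointwise product $(f+\lambda)(g+\mu)$ of the associated $S^1$-valued functions, combined with Proposition \ref{prop}(1). Homotopy invariance follows from Proposition \ref{prop}(2) applied fiberwise, so $\Phi$ descends to $K_1$. For injectivity, if $\#(f+\lambda) \equiv 0$, then every fiber $f^\lambda_{x_f}$ admits a lift, and the argument of Lemma \ref{adelelift}, with the role of $\Q$-periodicity replaced by the common limit $\lambda$, produces a global lift $\widetilde{f+\lambda} \in C(\A,\R)$; the straight-line homotopy displayed just before the theorem then connects $(f,\lambda)$ to the constant unitary $(0,\lambda)$. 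For surjectivity, given $\phi \in C_0(\A_f,\Z)$, which is locally constant with compact support, I would fix any continuous $h:\R\to\R$ with $h(-\infty)=0$ and $h(+\infty)=1$ and set $U(\{x_\infty,x_f\}) := \exp\bigl(2\pi i\, h(x_\infty)\,\phi(x_f)\bigr)$; compact support of $\phi$ handles $x_f \to \infty$, integrality of $\phi$ handles $x_\infty \to \pm\infty$, so $U - 1 \in C_0(\A)$ and $\#U = \phi$ fiberwise by construction.

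Plugging into the split exact sequence $0 \to K_1(C_0(\A)) \to K_1(\CA^+) \to K_1(\C) \to 0$ and using $K_1(\C) = 0$ yields the stated identification $K_1(C^*(\A)) \cong C_0(\A_f,\Z)$. For the $K_0$ remark, any projection $p \in \CA^+$ is $\{0,1\}$-valued, hence locally constant; connectedness of each real fiber $\{\cdot, x_f\}$ forces $p$ to be constant along $x_\infty$, and letting $x_\infty \to \infty$ identifies that constant with the scalar part, so $p$ is globally $0$ or $1$, giving $K_0(\CA^+) \cong \Z$ and $K_0(C_0(\A)) = 0$. The main obstacle I anticipate is the bookkeeping in the injectivity step: extending the fiberwise lifts to a globally continuous function on $\A$ requires choosing compatible base-point values across the $K_N$-cosets and verifying continuity in the restricted product topology, which essentially reruns the lengthy argument of Lemma \ref{adelelift} with the boundary condition at infinity in place of $\Q$-periodicity.
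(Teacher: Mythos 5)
Your proposal is correct and follows the paper's strategy closely: define the invariant $\#(f+\lambda):\A_f\to\Z$ fiberwise via Remark~\ref{rem}, globalize it using the $K_N$-decomposition of Lemma~\ref{china} and the lifting technique of Lemma~\ref{adelelift}, connect unitaries of equal invariant by a linear homotopy, and invoke the unitization exact sequence together with $K_1(\C)=0$. You go somewhat beyond the paper's written proof by supplying verifications the paper leaves implicit --- notably that $\#(f+\lambda)$ actually lands in $C_0(\A_f,\Z)$ (integrality from the common limit $\lambda$ at $\pm\infty$, local constancy from Lemma~\ref{lift8}, and vanishing at infinity via the compact projection of $K\subset\A$ onto $\A_f$) and that the map is surjective, via the explicit unitary $U=\exp\bigl(2\pi i\,h(x_\infty)\phi(x_f)\bigr)$ --- both of which are worthwhile additions and do not change the underlying argument.
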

\newpage
\appendix
\section{Metric construction on the adele ring \texorpdfstring{$\A$}{A}}\label{metric}

The adele ring $\mathbb{A}$ of $\mathbb{Q}$ is endowed with the \textbf{restricted direct product topology}, explicitly constructed as:

\begin{equation*}
\mathbb{A}= {\prod_{p \leq \infty}}'\left(\mathbb{Q}_p, \mathbb{Z}_p\right) := 
\left\{ 
    (x_p) \in \prod_{p \leq \infty} \mathbb{Q}_p \,\bigg|\, 
    \begin{aligned}
        &x_p \in \mathbb{Z}_p \text{ for } \\
        &\text{almost all primes } p 
    \end{aligned}
\right\}
\end{equation*}
where the topology is generated by the fundamental system of open neighborhoods:
\begin{equation*}
U = \left( \prod_{p \in S} U_p \right) \times \left( \prod_{p \notin S} \mathbb{Z}_p \right)
\end{equation*}
with:
\begin{itemize}
    \item $S = \{\infty\} \cup \{\text{finite set of primes}\}$
    \item $U_p \subseteq \mathbb{Q}_p$ open in the standard topology ($\mathbb{R}$ when $p=\infty$)
    \item $\mathbb{Z}_p$ denoting the $p$-adic integers for primes $p < \infty$
\end{itemize}

This specific construction for $\mathbb{Q}$ maintains the key property that $\mathbb{A}$ is a locally compact topological ring.

We claim that the following map from $\A$ to $\mathbb{R}$ is a metric:
\begin{align}
    \lvert\cdot\rvert_{\A}:\{x_\infty,x_f\}\mapsto \lvert x_\infty\rvert_{\infty}+\lvert x_f\rvert_{\A_f},
\end{align}
where
\[
|x_f|_{\A_f} = 
\begin{cases} 
\max \left( \frac{|x_p|_p}{p} \right), & \forall p, x_p \in \mathbb{Z}_p \\
\max |x_p|_p, & \text{otherwise}.
\end{cases}
\]
Since both $\lvert\cdot\rvert_\infty$ and $\lvert\cdot\rvert_{p}$ satisfy the properties of non-negativity, symmetry, and the triangle inequality, it follows that\(\lvert\cdot\rvert_{\A}\) also constitutes a metric. 
\begin{theorem}
    The metric-induced topology and the original restricted product topology on the adele ring $\A$ are equivalent.

\end{theorem}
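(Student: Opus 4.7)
The plan is to verify that the two topologies share a common neighborhood base at the origin and then invoke translation invariance: the metric $\lvert\cdot\rvert_\A$ depends only on the difference of its arguments, and the restricted product topology is already a topological group topology on $\A$, so it suffices to compare local bases at $0$. I would split the argument into the two standard inclusions: every metric ball $B(0,\varepsilon)$ contains some basic restricted-product neighborhood of $0$, and conversely every basic restricted-product neighborhood of $0$ contains some metric ball $B(0,\delta)$.

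For the first inclusion, given $\varepsilon\in(0,1)$, I would choose a finite set $S$ of primes containing every $p\leq 2/\varepsilon$, and for each $p\in S$ pick $k_p\in\mathbb{N}$ large enough that $p^{-k_p-1}\leq\varepsilon/2$. Then the basic open set
\[
V \;=\; (-\varepsilon/2,\,\varepsilon/2)\ \times\ \prod_{p\in S} p^{k_p}\mathbb{Z}_p\ \times\ \prod_{p\notin S} \mathbb{Z}_p
\]
consists of adeles whose finite components all lie in $\mathbb{Z}_p$, so the first branch of the definition of $\lvert\cdot\rvert_{\A_f}$ applies and gives $\lvert x_f\rvert_{\A_f} = \max_p(\lvert x_p\rvert_p/p) \leq \varepsilon/2$ by construction (the contribution from $p\notin S$ is bounded by $1/p<\varepsilon/2$, and the contribution from $p\in S$ by $p^{-k_p-1}\leq\varepsilon/2$). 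Adding the archimedean contribution yields $V\subseteq B(0,\varepsilon)$.

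For the reverse inclusion, given a basic neighborhood $U=\prod_{p\in S}U_p\times\prod_{p\notin S}\mathbb{Z}_p$ of $0$, I would extract local balls $(-r_\infty,r_\infty)\subseteq U_\infty$ and $p^{k_p}\mathbb{Z}_p\subseteq U_p$ (with $k_p\geq 0$) for each finite $p\in S$, and set
\[
\delta \;=\; \min\bigl(1,\ r_\infty,\ \min_{p\in S,\,p<\infty} p^{-k_p-2}\bigr),
\]
which is positive since $S$ is finite. The crucial observation is that $\lvert x\rvert_\A<1$ forces every $x_p\in\mathbb{Z}_p$: otherwise the second branch of $\lvert\cdot\rvert_{\A_f}$ would apply, contributing a value at least $2$. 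With that in hand, the inequalities $\lvert x_\infty\rvert_\infty\leq\lvert x\rvert_\A<\delta\leq r_\infty$ and $\lvert x_p\rvert_p/p\leq\lvert x\rvert_\A<\delta\leq p^{-k_p-2}$ for each finite $p\in S$ together place $x$ inside $U$.

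The main delicate point will be the case split in the definition of $\lvert\cdot\rvert_{\A_f}$: one must ensure at each step that the chosen $\varepsilon$ or $\delta$ keeps us in the tame first branch, since the second branch lives on a completely different scale. Once this bookkeeping is handled by forcing $\delta<1$, the remaining computation is elementary — essentially the observation that on a restricted product one can bound the archimedean coordinate and finitely many $p$-adic coordinates simultaneously by a single real parameter while all other coordinates are automatically integral.
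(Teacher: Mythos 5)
Your proposal is correct and follows essentially the same route as the paper's proof: compare neighborhood bases at the origin, use translation invariance, and split the argument into the two standard inclusions, choosing the basic open set by controlling finitely many primes $p \leq 2/\varepsilon$ and absorbing the tail into $\prod \mathbb{Z}_p$, and choosing $\delta$ small enough (in particular $\delta < 1$) to stay in the first branch of the definition of $\lvert\cdot\rvert_{\A_f}$. Your explicit remark that $\lvert x\rvert_\A < 1$ forces every finite coordinate into $\mathbb{Z}_p$ (since the second branch would contribute at least $2$) makes more transparent what the paper handles implicitly by taking $\varepsilon < \tfrac12$.
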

\begin{proof}
To establish the equivalence between the metric-induced topology and the original restricted product topology on the adele ring $\A$, we must rigorously verify the following two conditions: 
\begin{enumerate}
    \item For every metric ball \( B_\varepsilon(x) := \{ y \in \mathbb{A}_{\mathbb{Q}} \mid d(x,y) < \varepsilon \} \) and any \( x \in B_\varepsilon(x) \), there exists a basic open set \( U \) in the restricted product topology such that:
    \[
    x \in U \subseteq B_\varepsilon(x).
    \]
    
    \item For every open set \( U \) in the restricted product topology and any \( x \in U \), there exists \( \delta > 0 \) such that:
    \[
    B_\delta(x) \subseteq U.
    \]
\end{enumerate}
    Case 1: We define the following open set:
    \begin{equation}
        U_0:=(-\frac{\varepsilon}{2},\frac{\varepsilon}{2})\times\prod_{p\leq\frac{2}{\varepsilon}}p^{log_2\lfloor\frac{2}{\varepsilon}\rfloor+1}\mathbb{Z}_p\times\prod_{p>\frac{2}{\varepsilon}}\mathbb{Z}_p,
    \end{equation}
    If $y=\{y_\infty,y_2,y_3,\dots\}\in U_0$, it can be verified that:
\[  
\begin{aligned}
    &\lvert y_\infty\rvert_\infty<\frac{\varepsilon}{2},        && \text{infinite place} \\
    &\frac{\lvert y_p\rvert_p}{p}\leq \frac{1}{p^{log_2\lfloor\frac{2}{\varepsilon}\rfloor+1}}<\frac{1}{2^{log_2\frac{2}{\varepsilon}}}=\frac{\varepsilon}{2},           && \forall p\leq\frac{2}{\varepsilon} \\
    &\frac{\lvert y_p\rvert_p}{p}<\frac{1}{2/\varepsilon}=\frac{\varepsilon}{2},           &&  \forall p>\frac{2}{\varepsilon}.
\end{aligned}
\]  
    Hence, \(\lvert y\rvert_{\A}<\frac{\varepsilon}{2}+\frac{\varepsilon}{2}=\varepsilon\), $x\in x+U_0\subset B_{\varepsilon}$.
    
    Case 2: Let
    \[x=\{x_\infty,x_2,x_3,x_5,\dots\}\in U = \left( \prod_{p \in S} U_p \right) \times \left( \prod_{p \notin S} \mathbb{Z}_p \right).\]
    For any place $p\in S$, if $p\neq\infty$, we can find a positive integer $m_p$ such that $p^{m_p}\mathbb{Z}_p\in U_p$, let
    \[
    I=\inf_{p\in S,\ p\neq \infty}\{p^{-{(m_p+1)}}\}.
    \]
    
    Choose a positive number $\frac{1}{2}>\varepsilon>0$ in order to ensure:
    \[
    x+(-\varepsilon,\varepsilon)\times\prod_{p\in S,\ p\neq \infty}p^{m_p}\mathbb{Z}_p\times\prod_{p \notin S} \mathbb{Z}_p \subseteq U,
    \]
    then define \(\delta=\frac{1}{2}\min\{\varepsilon, I\}\). Consider $y=\{y_\infty,y_2,y_3,\dots\}\in B_{\delta}(0)$, and conclude that:
    \begin{align*}
          &\lvert y_\infty\rvert_\infty<\frac{\varepsilon}{2},        && \text{infinite place} \\
    &\frac{\lvert y_p\rvert_p}{p}\leq \delta\Rightarrow \lvert y_p\rvert_p<p^{-m_p}\Rightarrow y_p\in p^{m_p}\mathbb{Z}_p,           && \forall p\in S\\
    &\frac{\lvert y_p\rvert_p}{p}<\frac{1}{2}\Rightarrow\lvert y_p\rvert_p <p\Rightarrow y_p\in\mathbb{Z}_p,           &&  \forall p\notin S,
    \end{align*}
    which implies
    \[
    B_{\delta}(x)\subseteq x+\left((-\varepsilon,\varepsilon)\times\prod_{p\in S,\ p\neq \infty}p^{m_p}\mathbb{Z}_p\times\prod_{p \notin S} \mathbb{Z}_p \right)\subseteq U.
    \]
    In summary, we have demonstrated the equivalence of the two topologies. 
\end{proof}
\printbibliography 
\end{document}